\newcommand{\myurl}[1]{\href{#1}{#1}}
\newcommand{\stirlingone}[2]{\genfrac{[}{]}{0pt}{}{#1}{#2}}
\newcommand{\eqdef}{\coloneqq}
\newcommand{\bC}{\mathbb{C}}
\newcommand{\bN}{\mathbb{N}}
\newcommand{\bNz}{\mathbb{N}_0}
\newcommand{\bQ}{\mathbb{Q}}
\newcommand{\bR}{\mathbb{R}}
\newcommand{\bZ}{\mathbb{Z}}
\newcommand{\cM}{\mathcal{M}}
\newcommand{\bF}{\mathbb{F}}
\newcommand{\cP}{\mathcal{P}}
\newcommand{\al}{\alpha}
\newcommand{\be}{\beta}
\newcommand{\ga}{\gamma}
\newcommand{\ka}{\varkappa}
\newcommand{\la}{\lambda}
\newcommand{\ph}{\varphi}
\newcommand{\si}{\sigma}
\newcommand{\Om}{\Omega}
\newcommand{\schur}{\operatorname{s}}
\renewcommand{\hom}{\operatorname{h}}
\newcommand{\Homgen}{\operatorname{H}}
\newcommand{\adj}{\operatorname{adj}}
\newcommand{\Incr}{\operatorname{Incr}}
\newtheorem{thm}{Theorem}[section]
\numberwithin{equation}{section}
\newtheorem{prop}[thm]{Proposition}
\newtheorem{lem}[thm]{Lemma}
\newtheorem{cor}[thm]{Corollary}
\theoremstyle{definition}
\newtheorem{defn}[thm]{Definition}
\newtheorem{ex}[thm]{Example}
\newtheorem{rem}[thm]{Remark}
\newcommand{\Hom}{\operatorname{H}}
\title{Complete homogeneous symmetric polynomials\\with repeating variables}
\author{Luis Angel Gonz\'{a}lez-Serrano and Egor A. Maximenko}
\begin{document}
\maketitle

\begin{abstract}
We consider polynomials of the form $\operatorname{h}_m(y_1^{[\varkappa_1]},\ldots,y_n^{[\varkappa_n]})$,
where $\operatorname{h}_m$ is the complete homogeneous polynomial of degree $m$ and $y_j^{[\varkappa_j]}$ denotes $y_j$ repeated $\varkappa_j$ times.
Using the decomposition of the generating function into partial fractions
we represent such polynomials in the form
\[
\operatorname{h}_m(y_1^{[\varkappa_1]},\ldots,y_n^{[\varkappa_n]})
=\sum_{j=1}^n \sum_{r=1}^{\varkappa_j}
\binom{r+m-1}{r-1} A_{y,\varkappa,j,r} y_j^m,
\]
where $A_{y,\varkappa,j,r}$
are some coefficients that do not depend on $m$.
We also provide an alternative proof using the inverse of the confluent Vandermonde matrix.

\medskip\noindent
\textbf{Keywords:}
complete homogeneous polynomials,
repeating variables,
confluent Vandermonde matrix,
partial fractions decomposition.

\medskip\noindent
\textbf{MSC (2020):}
05E05, 15A15.
\end{abstract}

\bigskip
\section*{Authors' data}
Luis Angel Gonz\'{a}lez-Serrano, \quad
\myurl{https://orcid.org/0000-0001-6330-1781}, \\
e-mail: lgonzalezs@ipn.mx, lags1015@gmail.com.

\medskip\noindent
Egor A. Maximenko, \quad
\myurl{https://orcid.org/0000-0002-1497-4338}, \\
e-mail: emaximenko@ipn.mx, egormaximenko@gmail.com.

\bigskip\noindent
Escuela Superior de F\'{i}sica y Matem\'{a}ticas, Instituto Polit\'{e}cnico Nacional,
Ciudad de M\'{e}xico, Postal Code 07730,
Mexico.

\medskip
\section*{Funding}

The authors have been partially supported by Proyecto CONAHCYT ``Ciencia de Frontera''
FORDECYT-PRONACES/61517/2020,
by CONAHCYT (Mexico) scholarships,
and by IPN-SIP projects (Instituto Polit\'{e}cnico Nacional, Mexico).

\clearpage
\tableofcontents

\bigskip

\section{Introduction}

The complete homogeneous polynomials form
a generating family in the algebra of symmetric polynomials.
Therefore, they play a key role in algebraic combinatorics and some related areas,
see~\cite{Egge2019,Macdonald1995,Stanley2001}.
The complete homogeneous polynomial of degree $m$ in $N$ variables $x_1,\ldots,x_N$ is defined by the following combinatorial formula:
\begin{equation}
\label{eq:hom_def}
\hom_m(x_1,\ldots,x_N)
\eqdef
\sum_{k\in\bN_0^N\colon|k|=m}
x_1^{k_1}\cdots x_N^{k_N}.
\end{equation}
Here $\bN_0=\{0,1,2,\ldots\}$
and $|k|=k_1+\cdots+k_N$.
The sum in the right-hand side of~\eqref{eq:hom_def}
contains $\binom{N+m-1}{N}$ terms.
For large values of $N$ or $m$,
it is convenient to use the following well-known efficient formula instead of~\eqref{eq:hom_def}:
\begin{equation}
\label{eq:hom_geom_progr}
\hom_m(x_1,\ldots,x_N)
=\sum_{j=1}^N
\frac{x_j^{m+N-1}}{\prod_{k\in\{1,\ldots,N\}\setminus\{j\}}(x_j-x_k)}.
\end{equation}
This formula makes sense if $x_1,\ldots,x_N$ are pairwise different numbers.
The right-hand side of~\eqref{eq:hom_geom_progr} is also a well-defined expression in the field of rational functions over the variables $x_1,\ldots,x_N$.
Nevertheless, it is not well defined when $x_1,\ldots,x_N$ are numbers
and some of them are equal.
For example,~\eqref{eq:hom_geom_progr} cannot be applied directly to compute~$\hom_m(7,8,8)$.

It can be shown that if $x_1\ne x_2$ and $x_2=x_3$, then
\begin{equation}
\label{eq:hm_x1_x2_x2}
\hom_m(x_1,x_2,x_2)
=\frac{x_1^{m+2}}{(x_1-x_2)^2}
-\frac{x_1 x_2^{m+1}}{(x_2-x_1)^2}
+\frac{(m+1)\,x_2^{m+1}}{x_2-x_1}.
\end{equation}
Furthermore,~\eqref{eq:hm_x1_x2_x2} implies that if $x_1,x_2$ are fixed complex numbers and $|x_1|<|x_2|$, then 
$\hom_m(x_1,x_2,x_2)$ does not belong to the class $O(|x_2|^m)$ as $m$ tends to infinity,
but it belongs to the class $O(m\,|x_2|^m)$.
Here $O$ is the usual Bachmann--Landau notation
(see, e.g., \cite{Erdelyi1956}).
Estimates of this sort do not follow directly from~\eqref{eq:hom_geom_progr}.

One can obtain~\eqref{eq:hm_x1_x2_x2} from~\eqref{eq:hom_geom_progr}
by summing the fractions and using the L'Hospital's rule, but such computations become complicated in case of many variables and high multiplicities.

In this paper, we provide some generalizations of~\eqref{eq:hom_geom_progr} to compute
$\hom_m(x_1,\ldots,x_N)$
when some of $x_1,\ldots,x_N$ coincide.
We denote by $y=(y_1,\ldots,y_n)$ the list of pairwise different variables and by $\ka=(\ka_1,\ldots,\ka_n)$ the list of multiplicities.
Then, we use the following short notation for the list of variables with repetitions:
\[
y^{[\ka]}
=
\Bigl(\,\underbrace{y_1,\ldots,y_1}_{\ka_1\ \text{times}}\,\,\ldots,\,
\underbrace{y_n,\ldots,y_n}_{\ka_n\ \text{times}}\,\Bigr).
\]
Our main results
(Theorems~\ref{thm:hom_rep_via_powers_A}
and~\ref{thm:hom_rep_via_powers_B})
are expansions of $\hom_m(y^{[\ka]})$ of the form
\begin{equation}
\label{eq:general_form_of_expansion}
\hom_m(y^{[\ka]})
=
\sum_{j=1}^n P_{y,\ka,j}(m) y_j^m,
\end{equation}
where $P_{y,\ka,j}(m)$ are some polynomials in variable $m$
whose coefficients are explicitly given in terms of $y_1,\ldots,y_m$ and $\ka_1,\ldots,\ka_m$.
We propose two (different but equivalent) formulas of the form~\eqref{eq:general_form_of_expansion}.

The paper has the following structure.
Section~\ref{sec:repeating}
introduces a notation for repeating variables.
Section~\ref{sec:multisets} explains some notation from combinatorics and explores the idea of identifying (``grouping together'')
elements of multisets.
In Section~\ref{sec:basic_facts_hom}, we recall some basic facts about complete homogeneous polynomials.
In Section~\ref{sec:combin_formula_hom_rep}, we present an elementary combinatorial formula that follows from~\eqref{eq:hom_def} by grouping together the coinciding monomials.

In Section~\ref{sec:bialternant_Schur_rep},
we recall a formula that expresses
every Schur polynomial with repeated variables
as a quotient of the determinants of generalized confluent Vandermonde matrices;
this formula was recently proven in~\cite{AS2024} and~\cite{GM2023}.

In Section~\ref{sec:def_AB},
we define expressions
$A_{y,\ka,s,r}$ and $B_{y,\ka,s,r}$.
They are rational functions in variables $y_1,\ldots,y_n$;
they also can be written
in terms of complete homogeneous polynomials
in variables $y_d/(y_d-y_s)$
and $1/(y_d-y_s)$, respectively
($d\in\{1,\ldots,n\}\setminus\{s\}$).
The expressions $A_{y,\ka,s,r}$
and $B_{y,\ka,s,r}$
play a crucial role in the subsequent sections.

In Section~\ref{sec:rational_decomposition},
we prove the following expansions of rational functions (with unital numerator)
into elementary fractions:
\begin{align}
\label{eq:rational_expansion_A}
\frac{1}{(1-y_1 t)^{\ka_1}\cdots (1-y_n t)^{\ka_n}}
&=
\sum_{s=1}^n \sum_{r=1}^{\ka_r}
\frac{A_{y,\ka,s,r}}{(1-y_s t)^r},
\\[1ex]
\label{eq:rational_expansion_B}
\frac{1}{(t-y_1)^{\ka_1}\cdots (t-y_n)^{\ka_n}}
&=\sum_{s=1}^n \sum_{r=1}^{\ka_r}
\frac{B_{y,\ka,s,r}}{(t-y_s)^r},
\end{align}
In Section~\ref{sec:hom_rep_expansion},
we prove our main results, i.e.,
two expansions of the form~\eqref{eq:general_form_of_expansion}:
\begin{align}
\label{eq:hom_expansion_A}
\hom_m(y^{[\ka]})
&=
\sum_{s=1}^n \sum_{r=1}^{\ka_s}
\binom{r+m-1}{r-1}\;
A_{y,\ka,s,r}\;
y_s^m,
\\[1ex]
\label{eq:hom_expansion_B}
\hom_m(y^{[\ka]})
&=
\sum_{s=1}^{n}
\sum_{r=1}^{\ka_s} 
\binom{m + |\ka| - 1}{r - 1}\;
B_{y,\ka,s,r}\;
y_s^{m + |\ka| -r}.
\end{align}
Our proof of~\eqref{eq:hom_expansion_A}
is based on~\eqref{eq:rational_expansion_A}.
To prove~\eqref{eq:hom_expansion_B},
we use the formula from Section~\ref{sec:bialternant_Schur_rep}
and the formula for the inverse of the confluent Vandermonde matrix~\cite{MoucoufZriaa2022}.
We also show the equivalence of~\eqref{eq:hom_expansion_A} and~\eqref{eq:hom_expansion_B}
directly.

We hope that this paper will be useful in studying some properties of the determinants and minors of banded Toeplitz matrices
generated by Laurent polynomials with multiple zeros.
Some connections between banded Toeplitz matrices and skew Schur polynomials were established in
\cite{A2012,
AGMM2019,
BumpDiaconis2002,
GarciaGarciaTierz2020,
MM2017}.

This paper may also be useful for better understanding the structure of the inverse of the confluent Vandermonde matrix~\cite{HouPang2002,
MoucoufZriaa2022,
Respondek2011,
Respondek2024}
and to obtain upper bounds for
the values of Schur polynomials
with repeating variables.

\medskip

\section{Notation for repeating variables}
\label{sec:repeating}

In this section, we recall some constructions from~\cite{GM2023}.
Suppose that $n\in\bN$.
Given $\ka = (\ka_1, \ldots, \ka_{n})$ in $\bNz^n$,
we denote by $|\ka|$ the sum of the elements of $\ka$:
\[
|\ka|\eqdef\sum_{j=1}^{n}\ka_j,
\]
and by $\si(\ka)$ be the list of the partial sums of $\ka$:
\begin{equation}
\label{eq:partial_sums_def}
\si(\ka)_q \eqdef \sum_{j=1}^{q} \ka_j\qquad
(q\in \{0, \ldots, n\}).
\end{equation}
In particular, $\si(\ka)_0\eqdef0$ for each $\ka$.

\begin{ex}
Let $\ka = (6, 4, 7, 1)$. Then
\[
\si(\ka)_0 = 0, \qquad
\si(\ka)_1 = 6, \qquad
\si(\ka)_2 = 10, \qquad
\si(\ka)_3 = 17, \qquad
\si(\ka)_4 = |\ka| = 18.
\]
\end{ex}

\begin{defn}
\label{def:gamma}
Let $\ka \in \bN^n$, $N \eqdef |\ka|$. 
Consider the following set of pairs of natural numbers:
\[
Q_\ka
\eqdef
\bigl\{(q,r)\in\bN^2\colon\quad
q\le n,\ r\le \ka_q\bigr\}.
\]
Define
$\rho_\ka\colon Q_\ka\to\{1,\ldots,N\}$
by
\[
\rho_\ka(q,r) \eqdef \si(\ka)_{q-1}+r.
\]
Define $\ga_\ka\colon\{1,\ldots,N\}\to Q_\ka$ by $\ga_\ka(k)\eqdef (q,r)$,
where
\begin{equation}
\label{eq:qr_def}
q \eqdef
\max \bigl\{j\in \{1,\ldots, n\} \colon \si(\ka)_{j-1} < k\bigr\},
\qquad 
r \eqdef k - \si(\ka)_{q-1}. 
\end{equation}
\end{defn}

In the context of~\eqref{eq:qr_def},
\begin{equation}
\label{eq:between_partial_sums}
\si(\ka)_{q-1} < k \leq \si(\ka)_{q}.
\end{equation}

\begin{rem}[consistency of the definitions of $\rho_\ka$ and $\ga_\ka$]
\label{rem:rho_and_ga_are_consistent}
Let $\ka\in\bN^n$.
Let us verify that the definitions of $\ga_\ka$ and $\rho_\ka$ are consistent,
i.e.,
the values of $\ga_\ka$ and $\rho_\ka$ belong to the indicated codomains.
If $(q,r)\in Q_\ka$, then
\[
\rho_\ka(q,r)
=\si(\ka)_{q-1}+r
\le \si(\ka)_{q-1} + \ka_q
= N.
\]
If $k\in\{1,\ldots,N\}$
and $(q,r)=\ga_\ka(k)$,
then $q\in\{1,\ldots,n\}$,
and from inequalities~\eqref{eq:between_partial_sums}
we get
\[
r = k - \si(\ka)_{q-1}
\le \si(\ka)_q - \si(\ka)_{q-1}
= \ka_q.
\]
\qedhere
\end{rem}

\begin{ex}
Let $\ka = (6,4,7,1)$.
Then $\ga_\ka(14) = (3,4)$, because 
    \[
    \underbrace{10}_{\si(\ka)_2} < 14 = \underbrace{10}_{\si(\ka)_2} + \underbrace{4}_{r} \leq \underbrace{17}_{\si(\ka)_3}.
    \]
\hfill\qed
\end{ex}

\begin{ex}
Let $\ka=(3, 2, 5)$.
Then $N=|\ka|=10$,
\[
Q_\ka =
\bigl\{
(1,1),\ (1,2),\ (1,3) ,\  
(2,1),\ (2,2),\   
(3,1),\ (3,2),\ (3,3),\ (3,4),\ (3,5) 
\bigr\},
\]
and
\begin{align*}
&
\ga_\ka(1) = (1, 1), \quad
\ga_\ka(2) = (1, 2), \quad 
\ga_\ka(3) = (1, 3),
\\
&
\ga_\ka(4) = (2, 1), \quad
\ga_\ka(5) = (2, 2),
\\
&
\ga_\ka(6) = (3, 1), \quad 
\ga_\ka(7) = (3, 2), \quad
\ga_\ka(8) = (3, 3), \quad
\ga_\ka(9) = (3, 4), \quad
\ga_\ka(10) = (3, 5).
\end{align*}
\hfill\qed
\end{ex}

\begin{prop}
\label{prop:ga_rho_inverse}
Let $\ka\in\bN^n$.
Then $\ga_\ka$ is the inverse function
to $\rho_\ka$.
Moreover, $\ga_\ka$ is the enumeration of $Q_\ka$ in the lexicographic order.
\end{prop}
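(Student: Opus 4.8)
The plan is to establish the two assertions separately: first that $\ga_\ka$ and $\rho_\ka$ are mutually inverse, and then that the resulting bijection lists $Q_\ka$ in lexicographic order. Both rest on a single structural fact, namely that since $\ka\in\bN^n$ forces every $\ka_j\ge 1$, the partial sums form a \emph{strictly} increasing chain $0=\si(\ka)_0<\si(\ka)_1<\cdots<\si(\ka)_n=N$. Remark~\ref{rem:rho_and_ga_are_consistent} already guarantees that both maps land in their stated codomains, so I would take well-definedness for granted and concentrate on the compositions.

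For the inverse claim, the direction $\rho_\ka\circ\ga_\ka=\operatorname{id}$ is immediate: writing $(q,r)=\ga_\ka(k)$, the definition gives $r=k-\si(\ka)_{q-1}$, whence $\rho_\ka(q,r)=\si(\ka)_{q-1}+r=k$. The substantive direction is $\ga_\ka\circ\rho_\ka=\operatorname{id}$. Here I would fix $(q,r)\in Q_\ka$, set $k=\rho_\ka(q,r)=\si(\ka)_{q-1}+r$, and use $1\le r\le\ka_q$ to derive the two-sided bound $\si(\ka)_{q-1}<k\le\si(\ka)_q$ of~\eqref{eq:between_partial_sums}. The step deserving the most care is verifying that this bound pins down the maximum defining $\ga_\ka$: the left inequality shows that $q$ lies in the set $\{j\colon\si(\ka)_{j-1}<k\}$, while the right inequality, together with strict monotonicity, gives $\si(\ka)_q\ge k$, so that $q+1$ does not lie in the set. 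Strict monotonicity makes this set an initial segment $\{1,\ldots,q\}$, so its maximum equals $q$, and then $r=k-\si(\ka)_{q-1}$ recovers the second coordinate. Since $|Q_\ka|=\sum_{q}\ka_q=N$, one composition being the identity would already force bijectivity, but checking both keeps the argument self-contained.

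For the lexicographic claim, I would show that $\rho_\ka$ is strictly increasing from $(Q_\ka,<_{\operatorname{lex}})$ to $(\{1,\ldots,N\},<)$; a strictly monotone bijection between a finite totally ordered set and $\{1,\ldots,N\}$ is precisely the order-preserving enumeration, so its inverse $\ga_\ka$ lists $Q_\ka$ in lexicographic order. Monotonicity splits into two cases. When $q=q'$ and $r<r'$, the value $\si(\ka)_{q-1}+r$ is visibly increasing in $r$. When $q<q'$, I would chain $\rho_\ka(q,r)\le\si(\ka)_{q-1}+\ka_q=\si(\ka)_q\le\si(\ka)_{q'-1}<\si(\ka)_{q'-1}+r'=\rho_\ka(q',r')$, using $r\le\ka_q$, the monotonicity of the partial sums across the gap $q\le q'-1$, and $r'\ge 1$. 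I expect no obstacle beyond bookkeeping; the only place where a hypothesis is genuinely used, rather than a definition merely unwound, is the strictness of the partial-sum chain, which delivers both the uniqueness of $q$ in the first part and the final strict inequality in the second.
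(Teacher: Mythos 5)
Your proof is correct and complete. It is worth knowing that the paper itself gives no proof of Proposition~\ref{prop:ga_rho_inverse}: it only checks in Remark~\ref{rem:rho_and_ga_are_consistent} that $\rho_\ka$ and $\ga_\ka$ land in the stated codomains, and then paraphrases the first claim as the uniqueness of the pair $(q,r)$ satisfying $k=\si(\ka)_{q-1}+r$, leaving both that uniqueness and the lexicographic claim unargued. Your write-up supplies precisely this missing content, and by the natural route: $\rho_\ka\circ\ga_\ka=\operatorname{id}$ is read off from the definitions; $\ga_\ka\circ\rho_\ka=\operatorname{id}$ follows because $\{j\colon \si(\ka)_{j-1}<k\}$ is an initial segment of $\{1,\ldots,n\}$ containing $q$ but not $q+1$, so its maximum is $q$; and the enumeration claim follows because $\rho_\ka$ is a strictly increasing bijection from $(Q_\ka,<_{\operatorname{lex}})$ onto $(\{1,\ldots,N\},<)$, hence an order isomorphism, so $\ga_\ka(1)<_{\operatorname{lex}}\cdots<_{\operatorname{lex}}\ga_\ka(N)$. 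Two cosmetic points, neither affecting correctness: the initial-segment (downward-closedness) property needs only that the partial sums are nondecreasing, and the strict inequalities you actually invoke ($\si(\ka)_{q-1}<k$ in the first part, $\si(\ka)_{q'-1}<\rho_\ka(q',r')$ in the second) come from $r\ge 1$ and $r'\ge 1$ for pairs in $Q_\ka$, not from strictness of the chain $\si(\ka)_0<\si(\ka)_1<\cdots<\si(\ka)_n$; so your closing sentence attributes slightly more to that strictness than your argument in fact uses.
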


In other words,
the first part of Proposition~\ref{prop:ga_rho_inverse}
means that for every
$k$ in $\{1,\ldots,N\}$,
$\ga_\ka(k)$
is the unique pair $(q,r)$ such that
$q\in \{1, \ldots, n\}$,
$r\in \{1, \ldots, \ka_q\}$,
and
\begin{equation}
\label{eq: partial_sums_residue}
k = \si(\ka)_{q-1} + r.
\end{equation}

\begin{defn}
\label{def:y_rep_ka}
Let $\ka\in\bNz^n$ and $y_1,\ldots,y_{n}$ be some numbers or variables.
We denote by $y^{[\ka]}$ the list of length $N \eqdef |\ka|$ constructed in the following manner.
For every $k$ in $\{1, \ldots, N\}$,
we put $(q,r)\eqdef\ga_\ka(k)$ and
define the $k$th component of $y^{[\ka]}$ to be $y_q$:
\[
(y^{[\ka]})_k \eqdef y_q.
\]
\end{defn}

\begin{ex}
Let $y=(y_1,y_2,y_3, y_4)$ and $\ka=(4,1,5)$.
Then
\[
y^{[\ka]}=(y_1,y_1,y_1,y_1,\ y_2,\ y_3,y_3,y_3,y_3,y_3).
\]
\hfill\qed
\end{ex}

\begin{rem}[multivariate polynomials]
\label{rem:polynomials}
Let $\bF$ be a field of characteristic $0$.
For example, $\bF$ can be
the field of rational numbers $\bQ$,
or the field of real numbers $\bR$,
or the field of complex numbers $\bC$.
We treat polynomials in one or many variables
as lists of coefficients;
see detail in
Hungerford~\cite[Chapter~3, Section~4]{Hungerford1980},
Lang~\cite[Chapter~4, Sections~1, 7]{Lang2005},
and~\cite{Kures2010}.
We use the traditional notation
$\bF[x_1,\ldots,x_N]$
for the ring of polynomials in $N$ variables
and $\bF(x_1,\ldots,x_N)$
for the corresponding quotient field
(whose elements are called ``rational functions'').
Notice that the names of the variables are not important.
It is well known (see a proof in~\cite{Kures2010})
that the evaluation of polynomials is a homomorphism.
\end{rem}

\begin{rem}[the repetition homomorphism]
\label{rem:rep_homomorphism}
Let $\ka\in\bN^n$ and $N\eqdef|\ka|$.
We define
\[
\Phi_\ka\colon
\bF[x_1,\ldots,x_N]
\to\bF[y_1,\ldots,y_n]
\]
by
\[
(\Phi_\ka(f))(y)\eqdef f(y^{[\ka]}),
\qquad\text{i.e.},\qquad
(\Phi_\ka(f))(y_1,\ldots,y_n)
\eqdef
f(y_1^{[\ka_1]},\ldots,y_n^{[\ka_n]}).
\]
$\Phi_\ka$
is a homomorphism
that preserves the identity element.
See Kure\v{s}~\cite{Kures2010}
for a much more general explanation of the polynomial homomorphisms defined by compositions.
With this notation,
$\hom_\la(y^{[\ka]})$
is the image of $\hom_\la(x)$ under $\Phi_\ka$.
\end{rem}

\medskip

\section{Identifying elements of multisets}
\label{sec:multisets}

The ideas of this section will be used in the proof of Proposition~\ref{prop:hom_with_rep_combinatorial}. 
In this section, we suppose that $n\in\bN$ and $m\in\bNz$.

Let $\cM_{n,m}$ be the set of all $n$-tuples of nonnegative integers with sum equal to $m$:
\begin{equation}
\label{eq:D_def}
\cM_{n,m}
\eqdef
\bigl\{\ka\in\bNz^n\colon\ |\ka|=m\bigr\}.
\end{equation}
Some authors denote this set by
$\bigl(\!\binom{[n]}{m}\!\bigr)$.
The elements of $\cM_{n,m}$ can be identified with multisets of size $m$ whose elements belong to $\{1,\ldots,n\}$,
or, in other words, with
``weak compositions of $m$ into $n$ parts'' \cite[Section 1.2]{Stanley2011}, or
``$n$-combinations of $m$ different objects, each available in unlimited supply'' \cite[Section 2.5]{Brualdi2009}.

Furthermore,
let $\Incr_{n,m}$ be the set of all nondecreasing functions $\tau\colon\{0,1,\ldots,n\}\to\bNz$ such that $\tau_0=0$ and $\tau_n=m$:
\begin{equation}
\label{eq:Incr_def}
\Incr_{n,m}\eqdef
\bigl\{
\tau\in\bN_0^n\colon\quad
0=\tau_0\le\tau_1\le\ldots\le\tau_n=m\bigr\}.
\end{equation}
For each $\ka$ in $\cM_{n,m}$,
let $\si(\ka)$ be the list of the partial sums of $\ka$ defined by \eqref{eq:partial_sums_def}. The following proposition is a well-known fact from elementary combinatorics \cite[Theorem 2.5.1]{Brualdi2009}.

\begin{prop}
\label{prop:multisets_to_incr_bijection}
$\si\colon\cM_{n,m}\to\Incr_{n,m}$ is a bijection.
The number of elements in $\cM_{n,m}$ equals
\[
\#\cM_{n,m}
=\binom{n+m-1}{n-1}
=\binom{n+m-1}{m}.
\]
\end{prop}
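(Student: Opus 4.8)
The plan is to establish the bijection $\si\colon\cM_{n,m}\to\Incr_{n,m}$ and then read off the cardinality from a standard stars-and-bars count. First I would verify that $\si$ is well defined, i.e., that for $\ka\in\cM_{n,m}$ the tuple $\si(\ka)$ actually lands in $\Incr_{n,m}$. By definition $\si(\ka)_0=0$ and $\si(\ka)_n=|\ka|=m$, and since each $\ka_j\ge 0$ the partial sums satisfy $\si(\ka)_{q-1}\le\si(\ka)_q$; hence $0=\si(\ka)_0\le\si(\ka)_1\le\dots\le\si(\ka)_n=m$, which is exactly the membership condition for $\Incr_{n,m}$.

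Next I would exhibit an explicit two-sided inverse rather than argue abstractly. Given $\tau\in\Incr_{n,m}$, define $\de(\tau)\in\bNz^n$ by $\de(\tau)_j\eqdef\tau_j-\tau_{j-1}$ for $j\in\{1,\dots,n\}$. The monotonicity $\tau_{j-1}\le\tau_j$ guarantees each difference is nonnegative, and the telescoping sum $\sum_{j=1}^n(\tau_j-\tau_{j-1})=\tau_n-\tau_0=m-0=m$ shows $\de(\tau)\in\cM_{n,m}$. Then I would check the two compositions: on one hand $\si(\de(\tau))_q=\sum_{j=1}^q(\tau_j-\tau_{j-1})=\tau_q$, recovering $\tau$ (using $\tau_0=0$); on the other hand $\de(\si(\ka))_j=\si(\ka)_j-\si(\ka)_{j-1}=\ka_j$, recovering $\ka$. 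This confirms $\si$ is a bijection with inverse $\de$, the discrete-differencing map.

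For the cardinality, I would count $\cM_{n,m}$ directly by the standard stars-and-bars argument: a weak composition of $m$ into $n$ parts corresponds to placing $m$ indistinguishable stars and $n-1$ bars in a row, the bars splitting the stars into the $n$ parts $\ka_1,\dots,\ka_n$. The number of such arrangements is the number of ways to choose the positions of the $n-1$ bars among the $n+m-1$ total symbols, namely $\binom{n+m-1}{n-1}$; equivalently, choosing the $m$ star positions gives $\binom{n+m-1}{m}$, and the two binomial coefficients agree since $(n+m-1)-(n-1)=m$. Since $\si$ is a bijection, $\#\Incr_{n,m}=\#\cM_{n,m}$ equals this same value.

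I do not expect a genuine obstacle here, as this is a classical elementary fact; the only point requiring mild care is bookkeeping with the index conventions $\tau_0=0$ and $\si(\ka)_0=0$ so that the telescoping and partial-sum identities line up correctly at the endpoints. The stars-and-bars count is routine, so the bulk of the work is simply writing the inverse map $\de$ and checking $\si\circ\de=\mathrm{id}$ and $\de\circ\si=\mathrm{id}$ cleanly.
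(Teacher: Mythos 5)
Your proof is correct and complete. Note, however, that the paper itself offers no proof of this proposition: it simply declares it ``a well-known fact from elementary combinatorics'' and cites Brualdi, Theorem 2.5.1. So your write-up supplies exactly the details that the paper delegates to the literature, and it does so by the canonical route: verifying that $\si$ lands in $\Incr_{n,m}$ (endpoints $\si(\ka)_0=0$, $\si(\ka)_n=m$, monotonicity from $\ka_j\ge 0$), exhibiting the difference map $\de(\tau)_j\eqdef\tau_j-\tau_{j-1}$ as a two-sided inverse via telescoping, and then counting $\cM_{n,m}$ by stars and bars. The only thing worth remarking is that one could instead count $\Incr_{n,m}$ directly (a $\tau\in\Incr_{n,m}$ is determined by the multiset $\{\tau_1\le\cdots\le\tau_{n-1}\}\subseteq\{0,\ldots,m\}$, giving $\binom{m+n-1}{n-1}$ immediately) and transport the count back through the bijection; this is a matter of taste, and your version is just as clean. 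No gaps.
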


\begin{ex}
For $n=3$ and $m=2$,
\[
\cM_{3,2}
=\bigl\{(0, 0, 2),\ (0, 1, 1),\ (0, 2, 0),\ 
(1, 0, 1),\ (1, 1, 0),\ (2, 0, 0)\bigr\},
\]
and $\binom{n+m-1}{n-1}=\binom{4}{2}=6$.
\hfill\qed
\end{ex}

The forthcoming Definition~\ref{def:grouping_function} and Proposition~\ref{prop:group_indices} correspond to the situation when we group together and identify some elements of multisets.
For the purpose of this paper, it is sufficient to identify some adjacent elements of $\{1,\ldots, N\}$, but one could generalize this idea to general set partitions (or equivalence relations).

\begin{defn}
\label{def:grouping_function}
Let $\ka\in\bN_0^n$, $N \eqdef |\ka|$.
Define $\ph_\ka\colon\bN_0^{N}\to\bN_0^n$,
\begin{equation}
\label{eq:ph_def}
\ph_\ka(\be)_q
\eqdef \sum_{\si(\ka)_{q-1}<p\le\si(\ka)_q}
\be_p
\qquad(\beta\in\bN_0^{N},\quad q\in\{1,\ldots,n\}).
\end{equation}
\end{defn}

\begin{ex}
Let $\ka=(2,3)$ and $\al=(1,2)$. Then $\si(\ka)_0 = 0$, $\si(\ka)_1= 2$, $\si(\ka)_2 = 5$, and  $\ph_\ka^{-1}(\{\al\})$ consists of the following elements (the underbrackets just illustrate the grouping corresponding to $\ka$):
\begin{align*}
&(\underbracket{1,0}, \underbracket{2,0,0}),
(\underbracket{1,0}, \underbracket{0,2,0}),
(\underbracket{1,0}, \underbracket{0,0,2}),
(\underbracket{1,0}, \underbracket{1,1,0}),
(\underbracket{1,0}, \underbracket{1,0,1}),
(\underbracket{1,0}, \underbracket{0,1,1}),
\\[1ex]
&(\underbracket{0,1}, \underbracket{2,0,0}),
(\underbracket{0,1}, \underbracket{0,2,0}),
(\underbracket{0,1}, \underbracket{0,0,2}),
(\underbracket{0,1}, \underbracket{1,1,0}),
(\underbracket{0,1}, \underbracket{1,0,1}),
(\underbracket{0,1},
\underbracket{0,1,1}).
\end{align*}
The cardinality of $\ph_\ka^{-1}(\{\al\})$ is $12$.
\hfill\qed
\end{ex}

\begin{prop}
\label{prop:group_indices}
Let $\ka\in\bN_0^n$, $N\eqdef |\ka|$, and $m\in\bN_0$.
Then
\begin{equation}
\label{eq:image_of_phi_which_sums_the_indices}
\bigl\{\ph_\ka(\beta)\colon\ \beta\in\cM_{N,m}\bigr\}
=\cM_{n,m}.
\end{equation}
Moreover, for every $\al$ in $\cM_{n,m}$,
the inverse image of $\{\al\}$ with respect to $\ph$ has cardinality
\begin{equation}
\label{eq:number_of_j_such_that_phi_j_eq_k}
\#\Bigl(\ph_\ka^{-1}(\{\al\})\Bigr)
=\prod_{p=1}^n
\binom{\ka_p+\al_p-1}{\al_p}.
\end{equation}
\end{prop}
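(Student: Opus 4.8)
The plan is to exploit the block structure of $\ph_\ka$. For $\be\in\bN_0^N$, its $N$ coordinates split into $n$ consecutive blocks, where the $q$-th block consists of the indices $p$ with $\si(\ka)_{q-1}<p\le\si(\ka)_q$ and therefore contains exactly $\ka_q$ entries. Reindexing coordinates block by block gives a bijection $\bN_0^N\to\prod_{q=1}^n\bN_0^{\ka_q}$ under which $\ph_\ka$ becomes the map sending each block to its sum: if $\be$ corresponds to $(v_1,\ldots,v_n)$ with $v_q\in\bN_0^{\ka_q}$, then $\ph_\ka(\be)_q=|v_q|$ for every $q$.

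First I would prove the image identity~\eqref{eq:image_of_phi_which_sums_the_indices}. Since $\ph_\ka$ merely regroups and adds coordinates, $|\ph_\ka(\be)|=|\be|$ for every $\be$, so $\be\in\cM_{N,m}$ forces $\ph_\ka(\be)\in\cM_{n,m}$; this gives the inclusion $\subseteq$. For the reverse inclusion, given $\al\in\cM_{n,m}$ I would choose, for each $q$, some $v_q\in\cM_{\ka_q,\al_q}$ (such a weak composition of $\al_q$ into $\ka_q$ parts exists since $\ka_q\ge1$), and assemble the $v_q$ into a single $\be\in\bN_0^N$; then $|\be|=\sum_{q=1}^n\al_q=m$ and $\ph_\ka(\be)=\al$, so $\al$ lies in the image.

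For the cardinality~\eqref{eq:number_of_j_such_that_phi_j_eq_k}, the block bijection identifies the fiber $\ph_\ka^{-1}(\{\al\})$ with the Cartesian product $\prod_{q=1}^n\{v\in\bN_0^{\ka_q}\colon|v|=\al_q\}=\prod_{q=1}^n\cM_{\ka_q,\al_q}$, since the single equation $\ph_\ka(\be)=\al$ is equivalent to the $n$ uncoupled conditions $|v_q|=\al_q$. By the multiplication principle, $\#\ph_\ka^{-1}(\{\al\})=\prod_{q=1}^n\#\cM_{\ka_q,\al_q}$, and Proposition~\ref{prop:multisets_to_incr_bijection} evaluates each factor as $\#\cM_{\ka_q,\al_q}=\binom{\ka_q+\al_q-1}{\al_q}$, which is exactly the asserted product.

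The hard part is not any computation but the bookkeeping: making the block-reindexing bijection $\bN_0^N\cong\prod_{q=1}^n\bN_0^{\ka_q}$ precise and verifying that the fiber over $\al$ genuinely decouples into the per-block fibers with no cross-constraints. Once this direct-product description of $\ph_\ka$ is in place, both assertions follow immediately from the single-block count $\#\cM_{\ka_q,\al_q}=\binom{\ka_q+\al_q-1}{\al_q}$ supplied by Proposition~\ref{prop:multisets_to_incr_bijection}.
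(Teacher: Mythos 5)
Your proof is correct and takes essentially the same route as the paper: the paper's proof likewise identifies the fiber $\ph_\ka^{-1}(\{\al\})$ with the Cartesian product $\prod_{p=1}^n \cM_{\ka_p,\al_p}$, reads off the cardinality from the per-block count $\#\cM_{\ka_p,\al_p}=\binom{\ka_p+\al_p-1}{\al_p}$, and gets surjectivity from the nonemptiness of that product. The only difference is presentational --- you make the block-reindexing bijection and the explicit construction of a preimage more detailed, bookkeeping the paper leaves implicit (and your parenthetical requiring $\ka_q\ge 1$ flags the same tacit restriction the paper's nonemptiness claim relies on, since the stated hypothesis only asks $\ka\in\bN_0^n$).
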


\begin{proof}
The definition of $\ph_\ka$ implies
that $\ph_\ka(\beta)\in\cM_{n,m}$ for every $\beta$ in $\cM_{N,m}$.
Given $\al$ in $\cM_{n,m}$,
\[
\ph_\ka^{-1}(\{\al\})
=
\bigl\{\beta\in\bN_0^{N}\colon\ \ph_\ka(\beta)=\al\bigr\}
=
\prod_{p=1}^n \cM_{\ka_p,\al_p}.
\]
The cardinality of the cartesian product on the right-hand side
is given by the right-hand side of~\eqref{eq:number_of_j_such_that_phi_j_eq_k}.
In particular, this cartesian product is nonempty.
\end{proof}

\medskip

\section{Basic facts about complete homogeneous polynomials}
\label{sec:basic_facts_hom}

In this section, we recall some well-known facts about complete homogeneos polynomials;
see also
\cite[Section~2.2]{Egge2019},
\cite[Section~1.2]{Macdonald1995},
and~\cite[Section~7.5]{Stanley2001}.
We will use the notion of formal series
and quotients of formal series;
see, e.g.,
\cite[Chapter~3, Section~5]{Hungerford1980}.

The combinatorial definition~\eqref{eq:hom_def}
easily implies the following recurrent formula:
\begin{equation}
\label{eq:hom_recur}
\hom_{m+1}(x_1,\ldots,x_N,x_{N+1})
=\hom_{m+1}(x_1,\ldots,x_N)
+x_{N+1}\hom_m(x_1,\ldots,x_{N+1}).
\end{equation}
In what follows, we abbreviate the list $(x_1,\ldots,x_N)$ by $x$.
So, \eqref{eq:hom_recur} can be written as
\[
\hom_{m+1}(x, x_{N+1})
= \hom_{m+1}(x) + x_{N+1} \hom_m(x).
\]
The generating function of the sequence $(\hom_m(x))_{m=0}^\infty$ is defined as
the following formal series:
\[
\Homgen(x)(t)\eqdef
\sum_{m=0}^\infty\hom_m(x)t^m.
\]
Notice that $\Homgen(x)(t)$ is an element of $R[[t]]$, where $R=\bF[x_1,\ldots,x_N]$.

The recurrent formula~\eqref{eq:hom_recur}
implies the following recurrent formula
for the generating functions:
\begin{equation}
\label{eq:Homgen_recur}
(1-x_{N+1}t)\Homgen(x,x_{N+1})(t)=\Homgen(x)(t).
\end{equation}
By induction over $N$,
we get the following explicit formula
for the generating function:
\begin{equation}
\label{eq:Homgen_explicit}
\Hom(x)(t)
=\frac{1}{\prod_{j=1}^N (1-x_j t)}.
\end{equation}
Notice that~\eqref{eq:Homgen_explicit}
is an identity in $R((t))$,
where $R=\bF[x_1,\ldots,x_N]$.

The fraction from the right-hand side of~\eqref{eq:Homgen_explicit}
can be decomposed into simple fractions:
\begin{equation}
\label{eq:decomp_simple_fraction1}
\frac{1}{\prod_{j=1}^N (1-x_j t)}
= \sum_{s=1}^{N}
\frac{x_s^{N-1}}{\displaystyle\prod_{\substack{1 \le j\le N\\j\ne s}}(x_s - x_j)}\,
\frac{1}{1-x_s t}.
\end{equation}
Furthermore,
$(1-y_s t)^{-1}
=\sum_{m=0}^\infty y_s^m t^m$.
Therefore,
\[
\sum_{m=0}^\infty \hom_m(x)\,t^m
=\sum_{m=0}^\infty
\Biggl(\sum_{s=1}^{N}
\frac{x_s^{m+N-1}}{\displaystyle\prod_{\substack{1 \le j\le N\\j\ne s}}(x_s - x_j)} \Biggr)\,t^m.
\]
Equating the coefficients,
we obtain~\eqref{eq:hom_geom_progr}.

The following fact is a particular case of Macdonald~\cite[Chapter I, Section 3, Example 10, page 47]{Macdonald1995} and Egge~\cite[Problem 3.11]{Egge2019}.
It was also proven by Gomezllata Marmolejo~\cite[Lemma~3.5]{Gomezllata2022}.

\begin{prop}
\label{prop:Gomezllata}
Let $m\in\bNz$.
The following identity holds
in $\bF[x_1,\ldots,x_n]$:
\begin{equation}
\label{eq:Gomezllata}
\hom_m(1 + x_1, \ldots, 1 + x_n) = \sum_{k = 0}^m \binom{m + n - 1}{m - k}\hom_{k}(x_1,\ldots,x_n)
\end{equation}
In the short notation,
\[
\hom_m(1^{[n]} + x) = \sum_{k = 0}^m \binom{m + n - 1}{m - k}\hom_{k}(x).
\]
\end{prop}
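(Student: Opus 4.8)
The plan is to prove the identity~\eqref{eq:Gomezllata} by working with generating functions, exploiting the product formula~\eqref{eq:Homgen_explicit} for $\Hom(x)(t)$. First I would write the generating function of the left-hand side sequence, namely $\sum_{m\ge 0}\hom_m(1^{[n]}+x)\,t^m = \Hom(1^{[n]}+x)(t)$. Applying~\eqref{eq:Homgen_explicit} with the shifted variables $1+x_1,\ldots,1+x_n$ gives
\begin{equation}
\label{eq:plan_gf_lhs}
\Hom(1^{[n]}+x)(t)
= \prod_{j=1}^n \frac{1}{1-(1+x_j)t}
= \prod_{j=1}^n \frac{1}{(1-t) - x_j t}.
\end{equation}
The key algebraic move is to factor out $(1-t)$ from each denominator. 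Setting $u\eqdef t/(1-t)$, each factor becomes $\frac{1}{1-t}\cdot\frac{1}{1-x_j u}$, so that
\begin{equation}
\label{eq:plan_gf_rewrite}
\Hom(1^{[n]}+x)(t)
= \frac{1}{(1-t)^n}\prod_{j=1}^n\frac{1}{1-x_j u}
= \frac{1}{(1-t)^n}\,\Hom(x)(u).
\end{equation}

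Next I would expand both factors on the right of~\eqref{eq:plan_gf_rewrite} as formal series and read off the coefficient of $t^m$. For the first factor, the negative binomial series gives $\frac{1}{(1-t)^n}=\sum_{i\ge 0}\binom{n+i-1}{i}t^i$. For the second factor, I would expand $\Hom(x)(u)=\sum_{k\ge 0}\hom_k(x)u^k$ and then re-expand each $u^k=\bigl(t/(1-t)\bigr)^k = t^k(1-t)^{-k}=\sum_{i\ge 0}\binom{k+i-1}{i}t^{k+i}$. Collecting the coefficient of $t^m$ in the product, the term indexed by $k$ contributes $\hom_k(x)$ times the coefficient of $t^{m-k}$ in $(1-t)^{-n}\cdot(1-t)^{-k}=(1-t)^{-(n+k)}$, which is $\binom{n+k+(m-k)-1}{m-k}=\binom{m+n-1}{m-k}$. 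This is exactly the claimed coefficient, so equating coefficients of $t^m$ yields~\eqref{eq:Gomezllata}.

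The main obstacle is justifying that the substitution $u=t/(1-t)$ and the double rearrangement are legitimate operations on formal power series rather than merely formal manipulations. The point is that $t/(1-t)=t+t^2+\cdots$ has zero constant term, so composition of the formal series $\Hom(x)(u)$ with $u=t/(1-t)$ is well defined in $R[[t]]$ where $R=\bF[x_1,\ldots,x_n]$, and the coefficient of each power $t^m$ receives contributions only from finitely many $k$ (those with $k\le m$, since $u^k$ starts at $t^k$). I would make this finiteness explicit to guarantee that interchanging the two summations and collecting the coefficient of $t^m$ is valid. Once this is addressed, comparing coefficients in $R[[t]]$ gives the identity in $\bF[x_1,\ldots,x_n]$ as stated.
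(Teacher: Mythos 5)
Your proof is correct, but it takes a genuinely different route from the paper. The paper applies the rational-function formula~\eqref{eq:hom_geom_progr} to the shifted variables $1+x_1,\ldots,1+x_n$, expands $(1+x_j)^{m+n-1}$ by the binomial theorem, and then re-identifies the inner sums $\sum_j x_j^k/\prod_{k\ne j}(x_j-x_k)$ as complete homogeneous polynomials, using the auxiliary vanishing fact that these sums are zero for $k<n-1$ (cited from~\cite[Lemma 4.3]{AGMM2019}). Your argument instead stays entirely at the level of generating functions: the factorization $1-(1+x_j)t=(1-t)\bigl(1-x_j u\bigr)$ with $u=t/(1-t)$ turns $\Hom(1^{[n]}+x)(t)$ into $(1-t)^{-n}\Hom(x)(u)$, and extracting the coefficient of $t^m$ via $(1-t)^{-(n+k)}$ gives exactly $\binom{m+n-1}{m-k}$; your justification of the substitution (zero constant term of $u$, hence well-defined composition, and only $k\le m$ contributing to each coefficient) is the right one. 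What each approach buys: the paper's proof is very short given that~\eqref{eq:hom_geom_progr} is already on the table, but it passes through $\bF(x_1,\ldots,x_n)$ (denominators $x_j-x_k$, so pairwise-distinct variables) and leans on an external vanishing lemma; yours works directly in $R[[t]]$ with $R=\bF[x_1,\ldots,x_n]$, needs no denominators in the $x$-variables and no auxiliary lemma, and so delivers the polynomial identity in a self-contained way — at the modest cost of the formal bookkeeping about composition of power series that you correctly flag.
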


\begin{proof}
Gomezllata Marmolejo proved this identity by combinatorial arguments.
We give another proof based on \eqref{eq:hom_geom_progr}:
\begin{align*}
    \hom_m(1^{[n]} + x) &= \sum_{j=1}^n \frac{(1 + x_j)^{m + n - 1}}{\prod_{k \neq j}((1 + x_j) - (1 + x_k))} 
    =
    \sum_{k = 0}^{m + n - 1} \binom{m + n - 1}{k}\sum_{j=1}^n\frac{x_j^{k}}{\prod_{k \neq j}(x_j - x_k)} 
    \\[1ex]
    &= 
    \sum_{k = n - 1}^{m + n - 1} \binom{m + n - 1}{k} \hom_{k - n + 1}(x)
    =
    \sum_{k = 0}^{m} \binom{m + n - 1}{m - k} \hom_{k}(x).
\end{align*}
In the third equality, we have used the well-known fact that the inner sum vanishes for $k< n - 1$ (see, for example, \cite[Lemma 4.3]{AGMM2019}).
\end{proof}

\medskip

\section{\texorpdfstring{Combinatorial formula for $\boldsymbol{\hom_m(y^{[\ka]})}$}{Combinatorial formula for hm(ykappa)}}
\label{sec:combin_formula_hom_rep}

For $x=y^{[\ka]}$,
some monomials in the right-hand side of~\eqref{eq:hom_def} coincide.
Grouping them together,
we obtain the following combinatorial identity for $\hom_m(y^{[\ka]})$.

\begin{prop}
\label{prop:hom_with_rep_combinatorial}
Let $\ka\in\bN_0^n$ and $m\in\bN_0$.
Then the following identity holds
in $\bF[y_1,\ldots,y_n]$:
\begin{equation}
\label{eq:hom_with_rep_combinatorial}
\hom_m(y^{[\ka]})
=
\sum_{k\in\cM_{n,m}}
\prod_{j=1}^n
\binom{\ka_j+k_j-1}{k_j}
\prod_{j=1}^n y_j^{k_j}.
\end{equation}
\end{prop}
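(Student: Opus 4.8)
The plan is to start from the combinatorial definition~\eqref{eq:hom_def} applied to the list $x = y^{[\ka]}$ of length $N = |\ka|$, and then group together the monomials that coincide after the repetition. Writing out the definition, we have
\[
\hom_m(y^{[\ka]})
= \sum_{\be \in \cM_{N,m}}
\prod_{p=1}^N (y^{[\ka]})_p^{\be_p}.
\]
The key observation is that $(y^{[\ka]})_p = y_q$ whenever $p$ lies in the block of indices corresponding to the $q$th variable, i.e.\ whenever $\si(\ka)_{q-1} < p \le \si(\ka)_q$. Hence for a fixed exponent vector $\be$, the monomial $\prod_p (y^{[\ka]})_p^{\be_p}$ collapses to $\prod_{q=1}^n y_q^{\al_q}$, where $\al_q = \sum_{\si(\ka)_{q-1} < p \le \si(\ka)_q} \be_p$. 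This is precisely the grouping map $\ph_\ka$ from Definition~\ref{def:grouping_function}, so the monomial attached to $\be$ depends only on $\al = \ph_\ka(\be)$.

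The main step is then to reorganize the sum over $\be \in \cM_{N,m}$ by first fixing the grouped exponent $\al$ and summing over all $\be$ in its fiber. By Proposition~\ref{prop:group_indices}, the image of $\cM_{N,m}$ under $\ph_\ka$ is exactly $\cM_{n,m}$, and each fiber $\ph_\ka^{-1}(\{\al\})$ has cardinality $\prod_{p=1}^n \binom{\ka_p + \al_p - 1}{\al_p}$. Since every $\be$ in a given fiber contributes the identical monomial $\prod_q y_q^{\al_q}$, the inner sum simply multiplies that monomial by the fiber cardinality. This yields
\[
\hom_m(y^{[\ka]})
= \sum_{\al \in \cM_{n,m}}
\#\bigl(\ph_\ka^{-1}(\{\al\})\bigr)
\prod_{q=1}^n y_q^{\al_q}
= \sum_{\al \in \cM_{n,m}}
\prod_{p=1}^n \binom{\ka_p + \al_p - 1}{\al_p}
\prod_{q=1}^n y_q^{\al_q},
\]
which is~\eqref{eq:hom_with_rep_combinatorial} after renaming the summation index $\al$ to $k$.

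I expect the only delicate point to be justifying that this is a genuine identity in the polynomial ring $\bF[y_1,\ldots,y_n]$ rather than merely a numerical one: one must note that distinct $\al \in \cM_{n,m}$ produce distinct (hence linearly independent) monomials $\prod_q y_q^{\al_q}$, so that the grouping does not accidentally merge contributions across different $\al$. This is immediate, and the rest is a straightforward application of the partition-of-the-index-set idea already developed in Section~\ref{sec:multisets}; indeed, the whole point of introducing $\ph_\ka$ and proving Proposition~\ref{prop:group_indices} was to make this grouping argument clean. The bijection between fibers and the cartesian product $\prod_p \cM_{\ka_p,\al_p}$, established in the proof of Proposition~\ref{prop:group_indices}, is exactly what converts the combinatorial fiber count into the product of binomial coefficients appearing in the statement.
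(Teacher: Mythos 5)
Your proof is correct and follows essentially the same route as the paper's own argument: apply the combinatorial definition~\eqref{eq:hom_def} to $x=y^{[\ka]}$, collapse each monomial via the grouping map $\ph_\ka$, and count the fibers using Proposition~\ref{prop:group_indices}. The only difference is cosmetic (your closing remark about distinct monomials is unnecessary, since summing a finite sum fiber by fiber is already a valid rearrangement in the polynomial ring), so nothing further is needed.
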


\begin{proof}
We present a detailed proof based on Proposition~\ref{prop:group_indices}.
Let $N \eqdef |\ka|$ and $\ph_\ka$ be as in Proposition~\ref{prop:group_indices}.
Due to~\eqref{eq:image_of_phi_which_sums_the_indices},
$\cM_{N,m}$ decomposes into the following disjoint union:
\begin{equation}
\label{eq:setj_separation}
\cM_{N,m}
=\bigcup_{k\in\cM_{n,m}}
\bigl\{j\in\bNz^{N}\colon\ \ph_\ka(j)=k\bigr\}.
\end{equation}
We apply~\eqref{eq:hom_def}
with $x\eqdef y^{[\ka]}$
and separate the set of the indices according to~\eqref{eq:setj_separation}.
In other words,
we group together the summands corresponding to the indices $j$ having the same image $k$ with respect to $\ph_\ka$:
\[
\hom_m(y^{[\ka]})
=\hom_m(x)
=\sum_{j\in\cM_{N,m}}
\prod_{p=1}^{N}x_p^{j_p}
=\sum_{k\in\cM_{n,m}}
\sum_{\substack{j\in\bN_0^{N}\\\ph_\ka(j)=k}}
\prod_{p=1}^{N}x_p^{j_p}.
\]
When $\ph_\ka(j)=k$,
\[
\prod_{p=1}^{N}x_p^{j_p}
=\prod_{q=1}^n
\prod_{p = \si_\ka(q-1) + 1}^{\si_\ka(q)}
x_p^{j_p}
=\prod_{q=1}^n
\prod_{p = \si_\ka(q-1) + 1}^ {\si_\ka(q)}
y_q^{j_p}
=\prod_{q=1}^n y_q^{k_q}.
\]
So,
\[
\hom_m(y^{[\ka]})
=\sum_{k\in\cM_{n,m}}
\sum_{\substack{j\in\bN_0^{N}\\\ph_\ka(j)=k}}
\prod_{q=1}^n y_q^{k_q}.
\]
For a fixed $k$ in $\bN_0^n$, all summands in the inner sum (over $j$) are equal to each other.
Formula~\eqref{eq:number_of_j_such_that_phi_j_eq_k} yields the number of such summands, and we obtain~\eqref{eq:hom_with_rep_combinatorial}.
\end{proof}

As a particular case of Proposition~\ref{prop:hom_with_rep_combinatorial}, we compute $\hom_m$ of one variable $t$ repeated $r$ times, i.e.,
$\hom_m\Bigl(\,\underbrace{t,\ldots,t}_{r}\,\Bigr)$.
The trivial case $m<0$
will be useful in the following sections.

\begin{prop}
\label{prop:hom_one_variable_with_repetitions}
Let $r\in\bN$, $m\in\bZ$.
Then the following identity holds in $\bF[t]$:
\begin{equation}
\label{eq:hom_one_variable_with_repetitions}
\hom_m(t^{[r]})
=
\binom{r+m-1}{r - 1}
\hom_m(t)
=
\begin{cases}
\binom{r+m-1}{r - 1} t^m, & m\ge0;
\\[1ex]
0, & m<0.
\end{cases}
\end{equation}
\end{prop}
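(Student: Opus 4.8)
The plan is to derive \eqref{eq:hom_one_variable_with_repetitions} as the special case of Proposition~\ref{prop:hom_with_rep_combinatorial} in which $n=1$. First I would handle the case $m \ge 0$. Setting $n=1$, $\ka = (r)$, and $y = (t)$ in \eqref{eq:hom_with_rep_combinatorial}, the index set $\cM_{1,m}$ collapses to a single element, namely $k = (m)$, since the only nonnegative-integer $1$-tuple summing to $m$ is $(m)$ itself. The product $\prod_{j=1}^n$ reduces to a single factor, so the combinatorial formula immediately yields
\[
\hom_m(t^{[r]})
= \binom{r+m-1}{m} t^m
= \binom{r+m-1}{r-1} t^m,
\]
where the last equality is the standard symmetry of binomial coefficients, $\binom{r+m-1}{m} = \binom{r+m-1}{(r+m-1)-m} = \binom{r+m-1}{r-1}$.

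Next I would treat the case $m < 0$. Here I would appeal to the convention that $\hom_m(t) = 0$ whenever $m < 0$ (the combinatorial definition \eqref{eq:hom_def} produces an empty sum when the degree is negative), so the claimed value of $0$ follows. The middle expression $\binom{r+m-1}{r-1}\hom_m(t)$ is then consistent, since it also vanishes. It is worth noting that for $m \ge 0$ the identity $\hom_m(t) = t^m$ is trivial, which is what links the middle and right-hand sides of \eqref{eq:hom_one_variable_with_repetitions}.

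The proof is essentially immediate once one observes that $\cM_{1,m}$ is a singleton, so there is no genuine obstacle; the only point requiring a little care is the bookkeeping at the boundary $m < 0$, where one must invoke the convention $\hom_m = 0$ rather than the combinatorial formula, since Proposition~\ref{prop:hom_with_rep_combinatorial} was stated for $m \in \bN_0$. I expect the cleanest exposition to simply substitute $n=1$ into \eqref{eq:hom_with_rep_combinatorial} for the nonnegative case and then dispatch the negative case by the vanishing convention in one sentence.
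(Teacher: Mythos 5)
Your proof is correct and follows exactly the paper's approach: the paper's own proof is the one-line instruction ``Apply Proposition~\ref{prop:hom_with_rep_combinatorial} to one variable,'' which is precisely your specialization $n=1$, $\ka=(r)$, with $\cM_{1,m}$ collapsing to the singleton $\{(m)\}$. Your explicit handling of the $m<0$ case via the vanishing convention is a sensible elaboration of a detail the paper leaves implicit.
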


\begin{proof}
Apply Proposition \ref{prop:hom_with_rep_combinatorial} to one variable.
\end{proof}

\begin{cor}
Let $\ka\in\bN_0^n$ and $m\in\bN_0$.
Then the following identity holds in $\bF[y_1,\ldots,y_m]$:
\begin{equation}
\label{eq:hom_with_rep_combinatorial_cor}
\hom_m(y^{[\ka]})
=
\sum_{k\in\cM_{n,m}}
\prod_{j=1}^n
\hom_{k_j}(y_j^{[\ka_j]}).
\end{equation}
\end{cor}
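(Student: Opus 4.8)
The plan is to derive this corollary directly from Proposition~\ref{prop:hom_with_rep_combinatorial} by recognizing that each factor in the combinatorial formula is itself a complete homogeneous polynomial of a single repeated variable. First I would recall the statement of Proposition~\ref{prop:hom_with_rep_combinatorial}, which expresses $\hom_m(y^{[\ka]})$ as
\[
\hom_m(y^{[\ka]})
=
\sum_{k\in\cM_{n,m}}
\prod_{j=1}^n
\binom{\ka_j+k_j-1}{k_j}
\prod_{j=1}^n y_j^{k_j}.
\]
The key observation is that the two products over $j$ can be merged into a single product, and the resulting factor for each $j$,
\[
\binom{\ka_j+k_j-1}{k_j}\,y_j^{k_j},
\]
is exactly $\hom_{k_j}(y_j^{[\ka_j]})$ by Proposition~\ref{prop:hom_one_variable_with_repetitions}, since each $k_j\ge 0$ for $k\in\cM_{n,m}$.

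The main steps, in order, are as follows. First I would note that for $k\in\cM_{n,m}$ every component $k_j$ is a nonnegative integer, so the nonnegative branch of Proposition~\ref{prop:hom_one_variable_with_repetitions} applies and gives
\[
\hom_{k_j}(y_j^{[\ka_j]})
=\binom{\ka_j+k_j-1}{\ka_j-1}\,y_j^{k_j}.
\]
Here I would remark that the two binomial coefficients $\binom{\ka_j+k_j-1}{k_j}$ and $\binom{\ka_j+k_j-1}{\ka_j-1}$ are equal, which follows from the symmetry $\binom{a}{b}=\binom{a}{a-b}$ applied with $a=\ka_j+k_j-1$ and $b=k_j$. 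Next I would substitute this identity termwise into the right-hand side of~\eqref{eq:hom_with_rep_combinatorial}, combining the binomial coefficient and the monomial $y_j^{k_j}$ for each $j$ into the single factor $\hom_{k_j}(y_j^{[\ka_j]})$, thereby converting the two separate products into the single product $\prod_{j=1}^n \hom_{k_j}(y_j^{[\ka_j]})$. This yields~\eqref{eq:hom_with_rep_combinatorial_cor} directly.

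I do not expect any serious obstacle here, since the corollary is essentially a repackaging of the combinatorial formula. The only point requiring a moment of care is the edge case $\ka_j=0$: if some multiplicity vanishes, then $y_j$ does not actually appear in $y^{[\ka]}$, and one should check that the factor $\hom_{k_j}(y_j^{[0]})$ behaves correctly. In that situation the list $y_j^{[0]}$ is empty, and $\hom_{k_j}$ of the empty list equals $1$ when $k_j=0$ and $0$ when $k_j>0$; this matches the factor $\binom{\ka_j+k_j-1}{k_j}=\binom{k_j-1}{k_j}$ appearing in~\eqref{eq:hom_with_rep_combinatorial}, which is $1$ for $k_j=0$ and $0$ for $k_j>0$, so the two sides remain consistent. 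Thus the substitution is valid for all $\ka\in\bN_0^n$, and the proof reduces to the termwise identification described above.
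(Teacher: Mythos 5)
Your proof is correct and follows essentially the same route as the paper, which simply cites Propositions~\ref{prop:hom_with_rep_combinatorial} and~\ref{prop:hom_one_variable_with_repetitions} and leaves the termwise identification $\binom{\ka_j+k_j-1}{k_j}y_j^{k_j}=\hom_{k_j}(y_j^{[\ka_j]})$ implicit. Your explicit check of the edge case $\ka_j=0$ (where Proposition~\ref{prop:hom_one_variable_with_repetitions} does not directly apply, since it assumes $r\in\bN$) is a welcome detail that the paper's one-line proof omits.
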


\begin{proof}
It follows from Propositions~\ref{prop:hom_with_rep_combinatorial} and \ref{prop:hom_one_variable_with_repetitions}.
\end{proof}

\begin{rem}[the derivative of a univariate polynomial]
\label{rem:derivative_of_polynomial}
We recall that the derivative of a polynomial
can be defined in a purely algebraic way
\cite[Chapter IV, Section 3]{Lang2005}. 
Namely, if $f\in \bF[t]$ is given by the list of coefficients $(f_0, f_1, \ldots,f_m)$, then $f'$ is given by the list of coefficients $(f_1, 2f_2, \ldots, mf_m)$. 
This rule naturally extends to quotients of polynomials: $(f / g)' = (f'g - fg')/g^2$. 
\end{rem}

The polynomial $\hom_m(t^{[r]})$ from Proposition~\ref{prop:hom_one_variable_with_repetitions}
can also be expressed through the $(r-1)$th derivative of the monomial $\hom_{m+r-1}(t)$.

\begin{cor}
\label{cor:hom_rep_one_variable_through_derivative}
Let $r\in\bN$, $m\in\bZ$.
Then the following identity holds in $\bF[t]$:
\begin{equation}
\label{eq:hom_one_variable_rep_via_derivative}
\hom_m(t^{[r]})
=
\frac{1}{(r-1)!} \hom_{m + r-1}^{(r-1)}(t).
\end{equation}
\end{cor}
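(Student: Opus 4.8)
The plan is to prove the identity
\[
\hom_m(t^{[r]}) = \frac{1}{(r-1)!}\,\hom_{m+r-1}^{(r-1)}(t)
\]
by simply computing the right-hand side using the explicit form of $\hom_{m+r-1}(t)$ that we already have from Proposition~\ref{prop:hom_one_variable_with_repetitions}. Indeed, that proposition tells us $\hom_{m+r-1}(t)$ as a polynomial in $t$: when $m+r-1 \ge 0$ it equals $t^{m+r-1}$ (the binomial coefficient with a single variable collapses to $1$, since $\hom_\ell(t) = t^\ell$ for one variable), and it is $0$ otherwise. So the whole matter reduces to differentiating a monomial $r-1$ times and matching the resulting coefficient against the binomial coefficient $\binom{r+m-1}{r-1}$.

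First I would treat the generic case $m \ge 0$. Here $\hom_{m+r-1}(t) = t^{m+r-1}$, and the algebraic derivative from Remark~\ref{rem:derivative_of_polynomial} gives
\[
\hom_{m+r-1}^{(r-1)}(t)
= (m+r-1)(m+r-2)\cdots(m+1)\, t^{m}
= \frac{(m+r-1)!}{m!}\, t^{m}.
\]
Dividing by $(r-1)!$ produces the falling-factorial expression $\frac{(m+r-1)!}{m!\,(r-1)!}\,t^m = \binom{r+m-1}{r-1}\,t^m$, which is exactly $\hom_m(t^{[r]})$ from~\eqref{eq:hom_one_variable_with_repetitions}. This is the heart of the argument and it is genuinely routine.

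The only point requiring care is the boundary regime $m < 0$, where both sides should vanish, and here I would split into two subcases. If $m + r - 1 < 0$, then $\hom_{m+r-1}(t) = 0$ by Proposition~\ref{prop:hom_one_variable_with_repetitions}, so its derivative is $0$ and the identity reads $0 = 0$. If instead $-r < m \le -1$ (equivalently $0 \le m + r - 1 \le r - 2$), then $\hom_{m+r-1}(t) = t^{m+r-1}$ is a nonzero monomial of degree strictly less than $r-1$, and differentiating a polynomial $r-1$ times annihilates every term of degree below $r-1$; thus the $(r-1)$th derivative is again $0$, matching $\hom_m(t^{[r]}) = 0$. The mild subtlety here—and the step I expect to require the most attention—is to confirm that the purely algebraic derivative of Remark~\ref{rem:derivative_of_polynomial} kills low-degree monomials exactly as the analytic derivative does; but this is immediate from the coefficient-shifting rule $(f_0,\ldots,f_\ell) \mapsto (f_1, 2f_2, \ldots, \ell f_\ell)$, so no real obstacle arises. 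Consequently the identity holds in $\bF[t]$ for all $r \in \bN$ and all $m \in \bZ$.
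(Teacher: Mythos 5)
Your proof is correct and matches the paper's (implicit) argument: the paper states this corollary as an immediate consequence of Proposition~\ref{prop:hom_one_variable_with_repetitions}, i.e., differentiating the monomial $\hom_{m+r-1}(t)=t^{m+r-1}$ exactly as you do, including the vanishing cases. Your explicit handling of the two boundary subcases $m+r-1<0$ and $0\le m+r-1\le r-2$ is a careful spelling-out of what the paper leaves to the reader, and it is all correct.
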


\medskip

\section{Bialternant formula for Schur polynomials with repeating variables}
\label{sec:bialternant_Schur_rep}

Given an integer partition $\la$,
we denote by $\schur_\la(x_1,\ldots,x_n)$ or shortly by $\schur_\la(x)$ the Schur polynomial associated to $\la$,
in the variables $x_1,\ldots,x_n$.
Basic facts about Schur polynomials can be found in \cite[Section 1.3]{Macdonald1995} and \cite[Section 7.15]{Stanley2001}.
In particular, $\schur_\la(x)$ can be written as a quotient of two determinants, see~\cite{GM2023}:

\begin{equation}
\label{eq:schur_bialternant}
\schur_\la(x)
=
\frac{\det \bigl[ x_k^{j - 1 + \la_{N + 1 - j}} \bigr]_{j,k = 1}^{N}}%
{\det \bigl[ x_k^{j - 1} \bigr]_{j,k = 1}^{N}}.
\end{equation}

In this section, we explain a generalization of~\eqref{eq:schur_bialternant} to the case if some of $x_1,\ldots,x_n$ coincide
(Theorem~\ref{thm:bialternant_formula_Schur_rep}),
and we apply it to the particular case $\la=(m)$ corresponding to complete homogeneous polynomials.

\begin{defn}[generalized confluent Vandermonde matrix associated to an integer partition,
a list of multiplicities,
and a list of variables]
\label{defn:G}
Let $\ka\in\bN^n$,
$N\eqdef|\ka|$,
$\la = (\la_1,\ldots, \la_{N}) \in \cP(N)$,
and $y=(y_1,\ldots,y_n)$
be a list of variables or numbers.
We denote by $G_{\la,\ka}(y)$
the $N\times N$ matrix with the following components.
Given $j,k$ in $\{1,\ldots, N\}$,
we put $(q, r) \eqdef \ga_\ka(k)$ and
\begin{equation}
\label{eq:G_def}
G_{\la,\ka}(y)_{j,k}
\eqdef 
\begin{cases}
\displaystyle
\binom{j - 1 + \la_{N + 1 - j}}{r - 1}
y_q^{j - r + \la_{N + 1 - j}},
& \text{if}\  j - r + \la_{N + 1 - j} 
\ge 0;
\\[1.5ex]
0, & \text{otherwise}.
\end{cases}
\end{equation}
Here, as always,
$\la_p=0$ if $p>\ell(\la)$.
\end{defn}

Using Proposition~\ref{prop:hom_one_variable_with_repetitions} and Corollary~\ref{eq:hom_one_variable_rep_via_derivative}
we get other equivalent forms of~\eqref{eq:G_def}:
\begin{align}
\label{eq:G_def_hom}
G_{\la,\ka}(y)_{j,k}
&=
\binom{j - 1 + \la_{N + 1 - j}}{r - 1}
\hom_{j - r + \la_{N  + 1 - j}}(y_{q}),
\\[1ex]
\label{eq:G_def_hom_rep}
G_{\la,\ka}(y)_{j,k}
&=
\hom_{j - r  + \la_{N + 1 - j}}(y_{q}^{[r]}),
\\[1ex]
\label{eq:G_def_deriv}
G_{\la,\ka}(y)_{j,k}
&=
\frac{1}{(r-1)!}\,
\hom_{j - 1 + \la_{N + 1 - j}}^{(r - 1)}(y_{q}).
\end{align}

\begin{rem}[confluent Vandermonde matrix]
\label{rem:confluent_Vandermonde_matrix}
For $\la$ being the empty partition $\emptyset$,
we use short notation $V_\ka(y)$
instead of $G_{\emptyset,\ka}(y)$.
$V_\ka(y)$ is called the
\emph{confluent Vandermonde matrix}
associated to the points $y_1,\ldots,y_n$
and multiplicities $\ka_1,\ldots,\ka_n$.
The following formula for its determinant
is well known:
\begin{equation}
\label{eq:det_confluent_Vandermonde}
\det V_{\ka}(y)
= 
\prod_{1\le j<k\le n}
(y_k - y_j)^{\ka_j \ka_k}.
\end{equation}
Some proofs of~\eqref{eq:det_confluent_Vandermonde}
were given by Ha, Gibson~\cite{HaGibson1980}
and Kalman~\cite{Kalman1984};
see also a historical review by Respondek~\cite{Respondek2024}.
\end{rem}

The following theorem was proven
by Averkov and Scheiderer~\cite{AS2024};
see also more detailed proofs in our paper~\cite{GM2023}.

\begin{thm}
\label{thm:bialternant_formula_Schur_rep}
Let $\ka\in\bNz^n$, $N\eqdef|\ka|$, and $\la$ be a partition of length $\le N$.
Then the following identity holds
in $\bF[y_1,\ldots,y_n]$:
\begin{equation}
\label{eq:bialternant_formula_Schur_rep}
\schur_\la\bigl(y^{[\ka]}\bigr)
=\frac{\det G_{\la, \ka}(y)}{\det V_{\ka}(y)}.
\end{equation}
\end{thm}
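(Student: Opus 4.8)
The plan is to obtain \eqref{eq:bialternant_formula_Schur_rep} as a limiting case of the classical bialternant formula \eqref{eq:schur_bialternant} for distinct variables, using the fact that both sides of \eqref{eq:schur_bialternant} are rational functions and that coincidences of variables correspond to taking derivatives, as reflected in the derivative expression \eqref{eq:G_def_deriv} for the matrix $G_{\la,\ka}(y)$. First I would start from the $N$ genuinely distinct variables $x_1,\ldots,x_N$ and write the bialternant identity \eqref{eq:schur_bialternant}, where the denominator is the ordinary Vandermonde determinant $\det\bigl[x_k^{j-1}\bigr]_{j,k=1}^N$ and the numerator is $\det\bigl[x_k^{j-1+\la_{N+1-j}}\bigr]_{j,k=1}^N$. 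The key idea is that I want to let the variables coalesce into the groups prescribed by $\ka$, i.e., send the block of variables indexed by the $q$th group to the common value $y_q$, for each $q\in\{1,\ldots,n\}$.

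The main technical step is the confluence argument. When $\ka_q$ variables tend to a common limit $y_q$, the corresponding columns of a Vandermonde-type determinant become identical in the limit, so the determinant naively vanishes; the standard remedy is to replace the $r$th merging column (for $r=1,\ldots,\ka_q$) by a suitable divided difference, or equivalently by $\tfrac{1}{(r-1)!}$ times the $(r-1)$th derivative with respect to the group variable, which keeps the determinant nonzero and well defined. Applying this column-operation to the numerator determinant $\det\bigl[x_k^{j-1+\la_{N+1-j}}\bigr]$ produces exactly the entries $\tfrac{1}{(r-1)!}\hom_{j-1+\la_{N+1-j}}^{(r-1)}(y_q)$, which by \eqref{eq:G_def_deriv} are precisely the entries of $G_{\la,\ka}(y)$; the same operation applied to the ordinary Vandermonde determinant $\det\bigl[x_k^{j-1}\bigr]$ yields $\det V_\ka(y)=\det G_{\emptyset,\ka}(y)$. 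Since these column operations (subtracting multiples of columns, then dividing by factorials and evaluating a limit that is in fact a polynomial evaluation) are identical for numerator and denominator, and since the column operations have determinant $1$, taking the ratio gives \eqref{eq:bialternant_formula_Schur_rep}. Throughout I would phrase this not as an analytic limit but algebraically: I regard $\schur_\la(x)$ as the polynomial it is, apply the repetition homomorphism $\Phi_\ka$ from Remark~\ref{rem:rep_homomorphism}, and note that $\Phi_\ka$ commutes with the determinant expansion once the columns have been brought to their derivative form.

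The hard part will be justifying the confluence cleanly, namely that the derivative-based column replacement is legitimate over the polynomial ring $\bF[y_1,\ldots,y_n]$ and that the denominator $\det V_\ka(y)$ does not vanish identically, so that the quotient in \eqref{eq:bialternant_formula_Schur_rep} is well defined. Non-vanishing of $\det V_\ka(y)$ follows from the explicit product formula \eqref{eq:det_confluent_Vandermonde}, which is nonzero in $\bF[y_1,\ldots,y_n]$ because the $y_j$ are distinct indeterminates. The algebraic character of the derivative (Remark~\ref{rem:derivative_of_polynomial}) lets me avoid any appeal to analysis, and matching the resulting entries to \eqref{eq:G_def_deriv} is then a bookkeeping check using the indexing functions $\ga_\ka$ and $\rho_\ka$ from Definition~\ref{def:gamma}. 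Since the result is already established in \cite{AS2024} and \cite{GM2023}, I would present this as a concise indication of the argument rather than a full derivation, citing those sources for the detailed verification.
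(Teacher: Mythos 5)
The paper does not prove Theorem~\ref{thm:bialternant_formula_Schur_rep} at all: it imports the statement from \cite{AS2024} and \cite{GM2023}, so there is no internal proof to compare you against, and your decision to end with a sketch plus a citation to those sources is consistent with the paper's own treatment. Your confluence outline is the standard route to such statements and is essentially sound when organized as follows: while the $N$ variables $x_1,\ldots,x_N$ are still distinct, replace the columns of both determinants in \eqref{eq:schur_bialternant} by Newton divided differences within each group prescribed by $\ka$; divided differences of polynomials are again polynomials, so the new entries lie in $\bF[x_1,\ldots,x_N]$; then apply the repetition homomorphism $\Phi_\ka$ of Remark~\ref{rem:rep_homomorphism} entrywise, under which divided differences at coinciding nodes become Taylor coefficients, i.e.\ exactly the entries \eqref{eq:G_def_deriv} of $G_{\la,\ka}(y)$ and of $V_\ka(y)$; finally, \eqref{eq:det_confluent_Vandermonde} shows $\det V_\ka(y)\ne 0$ in $\bF[y_1,\ldots,y_n]$, so the quotient is legitimate.

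Two points in your write-up need repair. First, the claim that the column operations ``have determinant $1$'' is false: passing from the monomial columns to divided-difference columns rescales each determinant by the inverse of the product of the within-group differences $(x_{k'}-x_k)$. The argument survives for the other reason you give, namely that the \emph{same} rescaling hits numerator and denominator and cancels in the ratio; the clean algebraic formulation is to start from the polynomial identity $\schur_\la(x)\,\det\bigl[x_k^{j-1}\bigr]_{j,k=1}^N=\det\bigl[x_k^{j-1+\la_{N+1-j}}\bigr]_{j,k=1}^N$, cancel the common within-group Vandermonde factor in the integral domain $\bF[x_1,\ldots,x_N]$, and only then apply $\Phi_\ka$. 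Second, this order of operations is the crux, not bookkeeping: applying $\Phi_\ka$ directly to the displayed polynomial identity yields only $0=0$ whenever some $\ka_q\ge 2$, since repeated columns kill both determinants. You do gesture at this (``once the columns have been brought to their derivative form''), but a referee would want it stated explicitly that the divided differences and the cancellation must precede the substitution $x\mapsto y^{[\ka]}$.
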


Applying Theorem~\ref{thm:bialternant_formula_Schur_rep}
to $\la=(m)$
we obtain the following particular case.

\begin{cor}[complete homogeneous polynomial with repeated variables as a quotient of two determinants]
\label{cor:hom_rep_bialternant}
Let $\ka\in\bNz^n$ and $m\in\bNz$.
Then the following identity holds
in $\bF[y_1,\ldots,y_n]$:
\begin{equation}
\label{eq:hom_rep_bialternant}
\hom_{m}(y^{[\ka]})
= \frac{\det G_{(m), \ka}(y)}{\det V_{\ka}(y)}.
\end{equation}
\end{cor}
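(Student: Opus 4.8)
The plan is to obtain \eqref{eq:hom_rep_bialternant} as an immediate specialization of Theorem~\ref{thm:bialternant_formula_Schur_rep} to the one-row partition $\la=(m)$. First I would recall the elementary fact that for a single-row partition, the Schur polynomial collapses to the complete homogeneous polynomial, i.e.\ $\schur_{(m)}(x)=\hom_m(x)$ for any list of variables $x$. This is a standard identity in symmetric function theory (see, e.g., \cite[Section~7.15]{Stanley2001} or \cite[Section~1.3]{Macdonald1995}), and applying it with $x=y^{[\ka]}$ gives $\schur_{(m)}\bigl(y^{[\ka]}\bigr)=\hom_m\bigl(y^{[\ka]}\bigr)$, which is exactly the left-hand side of the desired identity.

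Next I would substitute $\la=(m)$ directly into \eqref{eq:bialternant_formula_Schur_rep}. Since $\la=(m)$ has length $1\le N$ whenever $N\ge 1$ (and the degenerate boundary cases can be checked separately), Theorem~\ref{thm:bialternant_formula_Schur_rep} applies and yields
\[
\schur_{(m)}\bigl(y^{[\ka]}\bigr)=\frac{\det G_{(m),\ka}(y)}{\det V_\ka(y)}.
\]
Combining the two displays gives \eqref{eq:hom_rep_bialternant} verbatim. Thus the corollary is nothing more than the composition of the symmetric-function identity $\schur_{(m)}=\hom_m$ with the already-proven bialternant formula.

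The only genuine content to verify is the reduction $\schur_{(m)}=\hom_m$, which I expect to be the main (and still quite mild) obstacle, mostly because one should confirm it is consistent with the particular matrix $G_{(m),\ka}(y)$ introduced in Definition~\ref{defn:G}. Concretely, with $\la=(m)$ one has $\la_1=m$ and $\la_p=0$ for $p\ge 2$, so in \eqref{eq:G_def} only the top entries $j=N$ receive the shift $\la_{N+1-j}=m$, while all other rows reproduce the confluent Vandermonde pattern $V_\ka(y)$. It would be prudent to remark that this is exactly the structure underlying the classical single-row bialternant formula, so no new computation is required beyond invoking \eqref{eq:bialternant_formula_Schur_rep}. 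I would keep the proof to one or two sentences, since all the real work resides in Theorem~\ref{thm:bialternant_formula_Schur_rep}, and the corollary merely records its instance at $\la=(m)$.
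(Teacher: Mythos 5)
Your proposal is correct and matches the paper's proof exactly: the paper likewise obtains the corollary by applying Theorem~\ref{thm:bialternant_formula_Schur_rep} with $\la=(m)$, implicitly using the standard identity $\schur_{(m)}=\hom_m$ that you spell out. Your additional observation that only the last row of $G_{(m),\ka}(y)$ carries the shift by $m$ is also consistent with the paper's Remark~\ref{rem:G_m}.
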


\begin{rem}
\label{rem:G_m}
For $\la=(m)$, the rule from Definition~\ref{defn:G}
simplifies as follows:
\begin{align*}
    G_{(m), \ka}(y)_{j,k} 
    =
    \begin{cases}
    \displaystyle \binom{N - 1 + m}{r - 1}
    y_q^{m+N-r}, &\text{if} \quad j = N;
    \\[2ex]
    \displaystyle \binom{j - 1}{r - 1}\hom_{j - r}(y_q), &\text{if} \quad 1\leq j \le N-1.
    \end{cases}        
    \end{align*}
\end{rem}

\begin{ex}
Let $n = 2$, $\ka = (1,2)$, and $m\in \bNz$.
Then
\begin{align*}
    \hom_m(y^{[\ka]}) 
    &=
    \frac{
    \det\begin{bmatrix}
    \binom{0}{0} & \binom{0}{0} & 0 \\[1ex]
    \binom{1}{0}y_1 & \binom{1}{0}y_2 & \binom{1}{1}\\[1ex]
\binom{m + 2}{0} y_1^{m + 2} & \binom{m + 2}{0} y_2^{m + 2}  & \binom{m + 2}{1} y_2^{m + 1}
    \end{bmatrix}
    }{
    \det\begin{bmatrix}
    \binom{0}{0} & \binom{0}{0} & 0 \\[1ex]
    \binom{1}{0}y_1 & \binom{1}{0}y_2 & \binom{1}{1}\\[1ex]
    \binom{2}{0}y_1^{2} & \binom{2}{0}y_2^{2} & \binom{2}{1}y_2
    \end{bmatrix}
    }
\\[1ex]
&=
\frac{
 y_1^{m + 2} - y_2^{m + 2} + (m + 2)y_2^{m + 1}(y_2 - y_1)}{(y_1 - y_2)^2}
\\[1ex]
&=
\frac{
y_1^{m + 2} - (m + 2)y_1y_2^{m + 1} + (m + 1)y_2^{m + 2}}{(y_1 - y_2)^2}.
\end{align*}
The obtained result is equivalent to~\eqref{eq:hm_x1_x2_x2}.
\end{ex}

\begin{ex}
Let $n = 2$, $\ka=(3,2)$, and $m\in\bNz$.
In this case,
\begin{align*}
G_{(m),\ka}(y) &= 
\begin{bmatrix}
\binom{0}{0} & 0 & 0 & \binom{0}{0} & 0 \\[0.5ex]
\binom{1}{0}y_1 & \binom{1}{1} & 0 & \binom{1}{0}y_2 & \binom{1}{1}
\\[0.5ex]
\binom{2}{0}y_1^{2} & \binom{2}{1}y_1 & \binom{2}{2} & \binom{2}{0}y_2^{2} & \binom{2}{1}y_2
\\[0.5ex]
\binom{3}{0}y_1^{3} & \binom{3}{1}y_1^{2} & \binom{3}{2}y_1 & \binom{3}{0}y_2^{3} & \binom{3}{1}y_2^{2}
\\[0.5ex]
\binom{m + 4}{0}y_1^{m + 4} & \binom{m + 4}{1}y_1^{m + 3} & \binom{m + 4}{2}y_1^{m + 2} & \binom{m + 4}{0}y_2^{m + 4} & \binom{m + 4}{1}y_2^{m + 3}
\end{bmatrix}.
\end{align*}
In this example, the expansion of
the determinant is not so simple.
\end{ex}

\begin{rem}[computation complexity of~\eqref{eq:hom_rep_bialternant}]
\label{rem:hom_rep_bialternant_computational_complexity}
Suppose that all arithmetic operations are performed with objects of one type $T$,
where $T=\bQ$, or $T=\bQ(y_1,\ldots,y_n)$,
or $T$ is an approximate version of $\bR$ or $\bC$.
Unlike the general case~\eqref{eq:bialternant_formula_Schur_rep},
for $\la=(m)$,
high powers of $y_q$ appear only
in the last row of the matrix.
Using the exponentiation by squaring
and reasoning as in~\cite{GM2023},
we see that $\hom_m(y^{[\ka]})$
can be computed via~\eqref{eq:hom_rep_bialternant}
by using
\[
O(N^3)+O(N\log m)
\]
arithmetic operations in $T$.
For exact types like $T=\bQ$ or $T=\bQ(y_1,\ldots,y_n)$,
this analysis is not very useful,
because the cost of an arithmetic operation in $T$ depends on the ``size'' of the operands,
and the size of the objects usually grows
with every arithmetic operation.
On the other hand,
if $T$ is an approximate version of $\bR$ or $\bC$,
the arithmetic operations in $T$ are performed in a constant time,
$a^b$ is computed as $\exp(b\log a)$ for large $b$,
and $\exp$ and $\log$ are computed in a constant time,
then the computational cost
of~\eqref{eq:hom_rep_bialternant}
reduces to $O(N^3)$.
\end{rem}


\medskip

\section{\texorpdfstring{Definition of expressions $\boldsymbol{A}$ and $\boldsymbol{B}$}{Definition of the coefficients A and B}}
\label{sec:def_AB}

In this section,
we define expressions
$A_{y,\ka,s,r}$ and $B_{y,\ka,s,r}$
which play the main role
in Theorems~\ref{thm:decomposition_in_simple_fractions_B},
\ref{thm:decomposition_in_simple_fractions_A},
\ref{thm:hom_rep_via_powers_A},
\ref{thm:hom_rep_via_powers_B},
and
Corollary~\ref{cor:last_column_of_inverse_confluent_Vandermonde_matrix_S_version}.
We also establish a simple relation between them.

\begin{defn}
Let $\ka\in\bN^n$
and $y_1,\ldots,y_n$ be some variables or pairwise different complex numbers, $s\in \{1,\ldots, n\}$, and $r \in \{1,\ldots, \ka_s\}$.
For each $d$ in $\{1,\ldots,n\}\setminus\{s\}$, we put
\[
w_{s,d} \eqdef \frac{y_d}{y_d - y_s}.
\]
Furthermore, let
$w \eqdef
(w_{s,d})_{d\in\{1,\ldots,n\}\setminus\{s\}}$
and
$\la \eqdef
(\ka_d)_{d\in\{1,\ldots,n\}\setminus\{s\}}$.
Then, we define $A_{y, \ka, s, r}$
by the following formula:
\begin{equation}
\label{eq:A_def_via_hom}
A_{y,\ka,s,r}
\eqdef
\frac{(-y_s)^{|\ka| - \ka_s}}{\displaystyle\prod_{d\in\{1,\ldots,n\}\setminus\{s\}}(y_d -y_s)^{\ka_d}}
\hom_{\ka_s-r}\bigl( w^{[\la]} \bigr),
\end{equation}
i.e.,
\begin{equation}
\label{eq:A_def_via_hom_w}
A_{y,\ka,s,r}
=
\frac{(-y_s)^{|\ka| - \ka_s}}{\displaystyle\prod_{d\in\{1,\ldots,n\}\setminus\{s\}}(y_d -y_s)^{\ka_d}}
\,
\hom_{\ka_s-r}\left(w_{s,1}^{[\ka_1]},\ldots,w_{s,s-1}^{[\ka_{s-1}]}, w_{s,s+1}^{[\ka_{s + 1}]},\ldots, w_{s,n}^{[\ka_n]}\right).
\end{equation}
\end{defn}

Equivalently, using~\eqref{eq:hom_with_rep_combinatorial}, we can rewrite~\eqref{eq:A_def_via_hom} as
\begin{equation}
\label{eq:A_def_multisum}
A_{y,\ka,s,r}
= (-1)^{|\ka|-\ka_s}
\sum_{\substack{(k_1,\ldots,k_{s-1},k_{s+1},\ldots,k_n)\in\bNz^{n-1}\\
\sum_{j\neq s}k_j =\ka_s-r}}\;
\prod_{\substack{1\leq d\leq n \\ d \neq s}}
\binom{\ka_d +k_d-1}{k_d}
\frac{y_s^{\ka_d}y_d^{k_d}}%
{\left(y_d-y_s\right)^{\ka_d + k_d}}.
\end{equation}

\begin{defn}
\label{def:B_via_hom}
Let $\ka\in\bN^n$,
$y_1,\ldots,y_n$ be some variables or pairwise different numbers,
$s\in\{1,\ldots,n\}$,
and $r\in\{1,\ldots,\ka_s\}$.
For each $d$ in $\{1,\ldots,n\}\setminus\{s\}$,
we put
\[
z_{s,d} \eqdef \frac{1}{y_d - y_s}.
\]
Furthermore, let
$z \eqdef
(z_{s,d})_{d\in\{1,\ldots,n\}\setminus\{s\}}$
and
$\la \eqdef
(\ka_d)_{d\in\{1,\ldots,n\}\setminus\{s\}}$.
Then, we define $B_{y, \ka, s, r}$
by the following formula:
\begin{equation}
\label{eq:B_def_via_hom}
B_{y,\ka,s,r}
\eqdef
(-1)^{|\ka|-\ka_s}
\,
\Biggl(\,
\prod_{d\in\{1,\ldots,n\}\setminus\{s\}}
z_{s,d}^{\ka_d}
\Biggr)
\,
\hom_{\ka_s-r}(z^{[\la]}),
\end{equation}
i.e.,
\begin{equation}
\label{eq:B_def_via_hom_detailed}
B_{y,\ka,s,r}
\eqdef
(-1)^{|\ka|-\ka_s}
\Biggl(\,
\prod_{d\in\{1,\ldots,n\}\setminus\{s\}}
z_{s,d}^{\ka_d}
\Biggr)
\hom_{\ka_s-r}\left(z_{s,1}^{[\ka_1]},\ldots,z_{s, s-1}^{[\ka_{s-1}]}, z_{s, s+1}^{[\ka_{s + 1}]},\ldots, z_{s,n}^{[\ka_n]}\right),
\end{equation}
\end{defn}

Using~\eqref{eq:hom_with_rep_combinatorial}, we can rewrite~\eqref{eq:B_def_via_hom} as
\begin{equation}
\label{eq:B_def_multisum}
B_{y,\ka,s,r}
=
(-1)^{|\ka| - \ka_s} 
\sum_{\substack{(k_1,\ldots,k_{s-1},k_{s+1},\ldots,k_n)\in\bNz^{n-1}\\
\sum_{j\neq s}k_j =\ka_s-r}}\;
\prod_{\substack{1\leq d\leq n \\ d \neq s}} \binom{\ka_d + k_d - 1}{k_d} \frac{1}{(y_d-y_s)^{\ka_d + k_d}}.
\end{equation}

\begin{ex}
\label{ex:A_B_example}
    Let $n = 2$ and $\ka = (2,1)$. Then
\begin{align*}
A_{y, \ka, 1, 1} 
&=
 \frac{ - y_1}{y_2 - y_1}
\hom_{1}\left( \frac{y_2}{y_2 - y_1} \right) = -\frac{y_1 y_2}{(y_2 - y_1)^2},
\\
A_{y, \ka, 1, 2}
&=
\frac{-y_1}{y_2 - y_1}
\hom_{0}\left( \frac{y_2}{y_2 - y_1} \right) = -\frac{y_1}{y_2 - y_1},
\\
A_{y, \ka, 2,1}&= 
\frac{y_2^2}{(y_1 - y_2)^2}
\hom_{0}\left( \frac{y_1}{y_1 - y_2}, \frac{y_1}{y_1 - y_2} \right) = \frac{y_2^2}{(y_1 - y_2)^2},
\end{align*}
and
\begin{align*}
    B_{y,\ka, 1, 1} 
    &= 
    \frac{-1}{y_2 - y_1} \hom_{1}\left( \frac{1}{y_2 - y_1}\right)
    =
    -\frac{1}{(y_2 - y_1)^2}, \\
    B_{y, k, 1, 2} 
    &=
    \frac{-1}{y_2 - y_1} \hom_{0}\left( \frac{1}{y_2 - y_1}\right)
    = 
    - \frac{1}{y_2 - y_1}, \\
    B_{y, \ka, 2, 1} 
    &=
    \frac{1}{(y_1 - y_2)^{2}} \hom_{0}\left( \frac{1}{y_1 - y_2}\right) = \frac{1}{(y_1 - y_2)^2}.
\end{align*}

\end{ex}

\begin{prop}
\label{prop:A_via_B_trivial}
Let $\ka\in\bN^n$,
$s\in\{1,\ldots,n\}$,
and $r\in\{1,\ldots,\ka_s\}$.
Then the following identity holds
in $\bF(y_1,\ldots,y_n)$:
\begin{equation}
\label{eq:A_via_B_simple}
A_{y,\ka,s,r}
=
(-1)^{|\ka| - r}
y_s^{r-\ka_s}
\left(\prod_{d\in\{1,\ldots,n\}\setminus\{s\}} y_d^{-\ka_d}\right)
\,
B_{1/y,\ka,s,r}.
\end{equation}
\end{prop}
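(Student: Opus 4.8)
The plan is to prove the identity $A_{y,\ka,s,r}=(-1)^{|\ka|-r}y_s^{r-\ka_s}\bigl(\prod_{d\neq s}y_d^{-\ka_d}\bigr)B_{1/y,\ka,s,r}$ by directly comparing the defining formulas~\eqref{eq:A_def_via_hom} and~\eqref{eq:B_def_via_hom}, tracking how the substitution $y\mapsto 1/y$ acts on the auxiliary quantities $w_{s,d}$ and $z_{s,d}$. First I would compute, for the substituted variables $\tilde y_j\eqdef 1/y_j$, the value of the $z$-quantity appearing in the definition of $B_{1/y,\ka,s,r}$, namely
\[
\tilde z_{s,d}
=\frac{1}{\tilde y_d-\tilde y_s}
=\frac{1}{\frac{1}{y_d}-\frac{1}{y_s}}
=\frac{y_d y_s}{y_s-y_d}
=-\frac{y_s y_d}{y_d-y_s}
=-y_s\,w_{s,d}.
\]
Thus $\tilde z_{s,d}$ differs from $w_{s,d}$ only by the scalar factor $-y_s$ (independent of $d$), which is exactly what makes the homogeneity of $\hom_{\ka_s-r}$ usable.

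Next I would exploit the homogeneity of complete homogeneous polynomials: $\hom_m$ is homogeneous of degree $m$, so $\hom_{\ka_s-r}((-y_s)w^{[\la]})=(-y_s)^{\ka_s-r}\hom_{\ka_s-r}(w^{[\la]})$, where $w^{[\la]}$ is the repeated list from~\eqref{eq:A_def_via_hom}. Hence the polynomial factor of $B_{1/y,\ka,s,r}$ becomes $(-y_s)^{\ka_s-r}\hom_{\ka_s-r}(w^{[\la]})$. I would then substitute $y\mapsto 1/y$ into the prefactor $(-1)^{|\ka|-\ka_s}\prod_{d\neq s}\tilde z_{s,d}^{\ka_d}$ of~\eqref{eq:B_def_via_hom}, using $\tilde z_{s,d}^{\ka_d}=(-y_s)^{\ka_d}w_{s,d}^{\ka_d}=(-y_s)^{\ka_d}y_d^{\ka_d}/(y_d-y_s)^{\ka_d}$, and collect all scalar factors. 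Comparing the resulting expression for $B_{1/y,\ka,s,r}$ against~\eqref{eq:A_def_via_hom}, which has prefactor $(-y_s)^{|\ka|-\ka_s}\big/\prod_{d\neq s}(y_d-y_s)^{\ka_d}$ multiplying the same $\hom_{\ka_s-r}(w^{[\la]})$, the two differ precisely by a product of monomials in $y_s$ and the $y_d$'s and a sign.

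The remaining work is pure bookkeeping of exponents and signs. Multiplying out, the total power of $(-y_s)$ accumulated from the prefactor $\prod_{d\neq s}(-y_s)^{\ka_d}=(-y_s)^{|\ka|-\ka_s}$ and from the homogeneity step $(-y_s)^{\ka_s-r}$ is $(-y_s)^{|\ka|-r}$, the products of $(y_d-y_s)^{\ka_d}$ cancel against the denominator of~\eqref{eq:A_def_via_hom}, and the surviving factor is $\prod_{d\neq s}y_d^{\ka_d}$. Solving the comparison for $A_{y,\ka,s,r}$ and moving the factor $\prod_{d\neq s}y_d^{\ka_d}$ to the other side as $\prod_{d\neq s}y_d^{-\ka_d}$ yields exactly~\eqref{eq:A_via_B_simple}; splitting $(-y_s)^{|\ka|-r}$ as $(-1)^{|\ka|-r}y_s^{|\ka|-r}$ and rewriting $y_s^{|\ka|-r}=y_s^{\ka_s}y_s^{|\ka|-\ka_s-r}$ against the $(-y_s)^{|\ka|-\ka_s}$ prefactor of $A$ recovers the precise power $y_s^{r-\ka_s}$ claimed. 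The main obstacle, such as it is, is the careful matching of exponents of $y_s$, since $y_s$ enters through three distinct sources — the numerator prefactor, the homogeneity rescaling, and the exponent $r-\ka_s$ in the target — so I would verify the net power of $y_s$ as a separate sanity check (and confirm against the $n=2$, $\ka=(2,1)$ values in Example~\ref{ex:A_B_example}) before declaring the identity proven.
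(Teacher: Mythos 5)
Your proposal is correct and follows essentially the same route as the paper's own proof: both compute that the substitution $y\mapsto 1/y$ turns $z_{s,d}$ into $-y_s w_{s,d}$, apply the homogeneity of $\hom_{\ka_s-r}$ to pull out the factor $(-y_s)^{\ka_s-r}$, and then match signs and exponents against~\eqref{eq:A_def_via_hom}. The only difference is presentational (you solve for $A$ after expanding $B_{1/y,\ka,s,r}$, while the paper substitutes its expansion into the right-hand side of~\eqref{eq:A_via_B_simple}), which is immaterial.
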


\begin{proof}
First, we notice that
\[
\frac{1}{\frac{1}{y_d} - \frac{1}{y_s}}
= \frac{(-y_s)y_d}{y_d - y_s} = -y_s w_{s,d}.
\]
We substitute these expressions instead of $z_{s,d}$
in~\eqref{eq:B_def_via_hom},
then we apply the homogeneous property of $\hom_{\ka_s-r}$:
\begin{align*}
B_{1/y,\ka,s,r}
&=
(-1)^{|\ka|-\ka_s}
\left(
\prod_{d\in\{1,\ldots,n\}\setminus\{s\}} (-y_s w_{s,d})^{\ka_d}
\right)
\,
\hom_{\ka_s-r}(-y_s w^{[\la]})
\\[1ex]
&=
(-1)^{\ka_s-r}\,
y_s^{|\ka|-r}\,
\left(\prod_{d\in\{1,\ldots,n\}\setminus\{s\}}\,\frac{y_d^{\ka_d}}{(y_d-y_s)^{\ka_d}}\right)\,
\hom_{\ka_s-r}(w^{[\la]}).
\end{align*}
After substituting this expression
into the right-hand side of~\eqref{eq:A_via_B_simple}
and simplifying,
we obtain the right-hand side of~\eqref{eq:A_def_via_hom}.
\end{proof}

\medskip

\section{Decomposition of rational functions with unital numerator}
\label{sec:rational_decomposition}

The main results of this section are Theorems~\ref{thm:decomposition_in_simple_fractions_B} and~\ref{thm:decomposition_in_simple_fractions_A}.

Recall the derivative of a polynomial is defined in the formal way, see Remark~\ref{rem:derivative_of_polynomial}. The Leibniz rule holds for the derivative
of the product of two polynomials:
if $f,g\in\bF[t]$, then $(fg)'=f'g+fg'$;
see, e.g., 
\cite[Chapter IV, Proposition 3.5]{Lang2005}.
This rule extends to rational functions
($f,g\in\bF(t)$).
Using the mathematical induction,
it is easy to prove the general Leibniz rule for the $m$th derivative of the product of $n$ rational functions. 
Namely, for every $f_1,\ldots,f_n$ in $\bF(t)$
and every $m$ in $\bNz$,
the following identity holds in $\bF(t)$:
\begin{equation}
\label{eq:Leibniz_rule_deriv_product}
\bigl(f_1\cdots f_n\bigr)^{(m)}(t)
=\sum_{k\in\cM_{n,m}}
\frac{m!}{k_1!\cdots k_n!}
\prod_{j=1}^n f_j^{(k_j)}(t).
\end{equation}

\begin{lem}
\label{lem:derivative_of_special_product}
Let $f \in \bF(t)$,
$t_0 \in \bF$ such that $t_0$ is not a pole of $f$,
$p\in\bN$, and
\[
g(t) \eqdef (t-t_0)^p f(t).
\]
Then
\[
g^{(m)}(t_0)
=
\begin{cases}
0, & \text{if} \quad 0\le m<p,\\
m!\,R(t_0), & \text{if} \quad m=p.
\end{cases}
\]
\end{lem}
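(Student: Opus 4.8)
The plan is to factor $g$ as a product and invoke the general Leibniz rule~\eqref{eq:Leibniz_rule_deriv_product} in the case $n=2$. I would set $h(t)\eqdef (t-t_0)^p$, so that $g=h\cdot f$, and then reduce everything to the behaviour of the formal derivatives of the pure power $h$. These are completely explicit: for $0\le k\le p$ one has $h^{(k)}(t)=\frac{p!}{(p-k)!}(t-t_0)^{p-k}$, while $h^{(k)}=0$ for $k>p$. Evaluating at $t_0$, every factor $(t-t_0)^{p-k}$ with $p-k>0$ vanishes, so $h^{(k)}(t_0)=0$ whenever $0\le k<p$, whereas $h^{(p)}(t_0)=p!$. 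This single computation is what drives the whole argument.

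Before evaluating at $t_0$, I would record that the relevant evaluations are legitimate. Since $t_0$ is not a pole of $f$, and the formal derivative of a rational function creates no new poles (it can only raise the order of an existing one; see Remark~\ref{rem:derivative_of_polynomial}), the point $t_0$ is not a pole of any $f^{(j)}$. Hence $f^{(j)}(t_0)$ is well defined for every $j\in\bNz$, and the products $h^{(k)}(t_0)\,f^{(m-k)}(t_0)$ below make sense in $\bF$.

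Then I would specialize~\eqref{eq:Leibniz_rule_deriv_product} to $n=2$, taking the two factors to be $h$ and $f$; since for $k=(k_1,k_2)\in\cM_{2,m}$ the multinomial coefficient $\frac{m!}{k_1!\,k_2!}$ equals $\binom{m}{k_1}$, this yields the ordinary binomial Leibniz formula
\[
g^{(m)}(t_0)=\sum_{k=0}^m \binom{m}{k}\,h^{(k)}(t_0)\,f^{(m-k)}(t_0).
\]
The evaluation of $h^{(k)}(t_0)$ now collapses the sum. If $0\le m<p$, then every index satisfies $k\le m<p$, so $h^{(k)}(t_0)=0$ throughout and $g^{(m)}(t_0)=0$. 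If $m=p$, the only surviving term is $k=p$, equal to $\binom{p}{p}h^{(p)}(t_0)\,f^{(0)}(t_0)=p!\,f(t_0)$, which is exactly the claimed value $m!\,f(t_0)$ for $m=p$ (here $R$ in the statement is the given function $f$).

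I do not expect a genuine obstacle: the statement is a direct consequence of Leibniz once the derivatives of $(t-t_0)^p$ are pinned down. The only point meriting care is the bookkeeping about poles, namely ensuring that working in $\bF(t)$ and evaluating at $t_0$ is meaningful, which is handled by the observation that neither $f$ nor its derivatives have a pole at $t_0$.
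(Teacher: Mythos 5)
Your proof is correct and follows essentially the same route as the paper: both apply the Leibniz rule~\eqref{eq:Leibniz_rule_deriv_product} with $n=2$ to the factorization $g=(t-t_0)^p\cdot f$, compute the derivatives of the power factor explicitly, and evaluate at $t_0$ so that only the single term with derivative order exactly $p$ on that factor can survive. Your added observations --- that $R(t_0)$ in the statement is a typo for $f(t_0)$, and that evaluation at $t_0$ is legitimate because formal differentiation creates no new poles --- are both correct and slightly more careful than the paper's own write-up.
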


\begin{proof}
Let $m\le p$.
By~\eqref{eq:Leibniz_rule_deriv_product},
\[
g^{(m)}(t)
=
\sum_{k=0}^{m}
\binom{m}{k}
f^{(k)}(t)
\frac{p!}{(p+k-m)!}(t-t_0)^{p+k-m}.
\]
Substitute $t_0$ instead of $t$:
\[
g^{(m)}(t_0)
=
\sum_{k=0}^{m}
\binom{m}{k}
f^{(k)}(t_0)
\frac{p!}{(p+k-m)!}\ 0^{p+k-m}.
\]
If $m<p$, all terms on the right-hand side are zero.
If $m=p$, all terms with $k>0$ are zero, and the term corresponding to $k=0$ equals $m!\, R(t_0)$.
\end{proof}

\begin{lem}
\label{lem: derivative_partial_fraction}
Let $a\in \bF$, $p\in\bN$,
and let $q$ be the following rational function:
\[
q(t) = \frac{1}{(t - a)^p}.
\]
Then for every $m$ in $\bNz$,
the following identity holds in $\bF(t)$:
\[
q^{(m)}(t)
= \frac{(-1)^{m}\,(p + m-1)!}{(p-1)!\,(t-a)^{m + p}}.
\]
\end{lem}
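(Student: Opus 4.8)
The statement is the formula $q^{(m)}(t) = \frac{(-1)^m (p+m-1)!}{(p-1)!\,(t-a)^{m+p}}$ for $q(t) = (t-a)^{-p}$. The cleanest approach is induction on $m$, since the recursive structure of differentiation matches the recursive structure of the claimed formula. I would first verify the base case $m=0$: here the right-hand side reads $\frac{(-1)^0(p-1)!}{(p-1)!\,(t-a)^p} = (t-a)^{-p} = q(t)$, as required.

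For the inductive step, assume the formula holds for some $m\ge 0$, and differentiate once more using the formal derivative rule from Remark~\ref{rem:derivative_of_polynomial} (extended to rational functions via the quotient rule). The key computation is
\[
\frac{d}{dt}\,(t-a)^{-(m+p)} = -(m+p)\,(t-a)^{-(m+p+1)},
\]
which is the single power-rule differentiation one needs. Applying this to $q^{(m)}(t)$ gives
\[
q^{(m+1)}(t)
= \frac{(-1)^m (p+m-1)!}{(p-1)!}\cdot\bigl(-(m+p)\bigr)\,(t-a)^{-(m+p+1)}
= \frac{(-1)^{m+1}(p+m)!}{(p-1)!\,(t-a)^{m+p+1}},
\]
where I have absorbed the factor $(m+p)$ into $(p+m-1)!$ to form $(p+m)!$ and tracked the sign into $(-1)^{m+1}$. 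Recognizing this as exactly the claimed formula with $m$ replaced by $m+1$ closes the induction.

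I do not expect any genuine obstacle here; the result is elementary and the induction is routine. The only point requiring mild care is justifying term-by-term differentiation of a negative power purely within the algebraic framework of $\bF(t)$ (rather than invoking analysis), but this is handled by the formal derivative and the quotient rule already recalled in Remark~\ref{rem:derivative_of_polynomial} and at the start of Section~\ref{sec:rational_decomposition}. One could alternatively prove the power rule $\bigl((t-a)^{-k}\bigr)' = -k(t-a)^{-k-1}$ for all $k\in\bN$ as a preliminary sublemma and then simply iterate, but folding it into the induction above is more economical.
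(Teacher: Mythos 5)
Your proof is correct and matches the paper's approach exactly: the paper simply states that the lemma ``is well known and can be proven by induction,'' and your argument is precisely that induction on $m$, with the base case and the power-rule step carried out in detail within the formal-derivative framework.
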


\begin{proof}
This result is well known and can be proven by induction.
\end{proof}

\begin{thm}[partial fraction decomposition of rational functions with unital numerator]
\label{thm:decomposition_in_simple_fractions_B}
Let $\ka\in\bN^n$.
Then the following identity holds in
$\bF(y_1,\ldots,y_n,t)$:
\begin{equation}
\label{eq:decomposition_in_simple_fractions_B}
 \frac{1}{\displaystyle \prod_{j=1}^{n}(t-y_j)^{\ka_j}} = \sum_{s=1}^{n} \sum_{r=1}^{\ka_s}\frac{B_{y,\ka,s,r}}{(t-y_s)^{r}}.
\end{equation}
\end{thm}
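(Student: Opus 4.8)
The plan is to establish the partial fraction decomposition by the standard two-step strategy: first argue that a decomposition of the prescribed form exists with \emph{some} coefficients, then compute those coefficients explicitly and recognize them as the $B_{y,\ka,s,r}$ defined in~\eqref{eq:B_def_via_hom}. The existence of a partial fraction decomposition of a rational function whose denominator factors as $\prod_{j=1}^n (t-y_j)^{\ka_j}$ with pairwise distinct poles $y_1,\ldots,y_n$ is classical over any field: there exist unique coefficients $c_{s,r}$ such that
\[
\frac{1}{\prod_{j=1}^n (t-y_j)^{\ka_j}}
=\sum_{s=1}^n\sum_{r=1}^{\ka_s}\frac{c_{s,r}}{(t-y_s)^r}.
\]
So the whole content of the theorem is the identification $c_{s,r}=B_{y,\ka,s,r}$.

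To extract $c_{s,r}$, I would fix $s$ and isolate the principal part at the pole $y_s$. Write the left-hand side as $(t-y_s)^{-\ka_s} f(t)$, where
\[
f(t)=\prod_{d\in\{1,\ldots,n\}\setminus\{s\}}(t-y_d)^{-\ka_d}
\]
is regular at $y_s$. The coefficient $c_{s,r}$ of $(t-y_s)^{-r}$ is the Taylor coefficient of $f$ at $y_s$ of order $\ka_s-r$; concretely, by Lemma~\ref{lem:derivative_of_special_product} and the standard principal-part formula,
\[
c_{s,r}=\frac{1}{(\ka_s-r)!}\,f^{(\ka_s-r)}(y_s).
\]
The main computational step is to evaluate this derivative. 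I would apply the general Leibniz rule~\eqref{eq:Leibniz_rule_deriv_product} to the product $f=\prod_{d\ne s}(t-y_d)^{-\ka_d}$, differentiating each factor via Lemma~\ref{lem: derivative_partial_fraction}. This produces a sum over multi-indices $(k_d)_{d\ne s}$ with $\sum_{d\ne s}k_d=\ka_s-r$, and after collecting factorials and evaluating at $t=y_s$ each factor $(y_s-y_d)^{-(\ka_d+k_d)}=(-1)^{\ka_d+k_d}(y_d-y_s)^{-(\ka_d+k_d)}$ contributes a binomial coefficient $\binom{\ka_d+k_d-1}{k_d}$ together with a sign.

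The bookkeeping of signs is the step I expect to require the most care: pulling the overall $(-1)^{|\ka|-\ka_s}$ out of the product of $(-1)^{\ka_d+k_d}$ factors (using $\sum_{d\ne s}k_d=\ka_s-r$ and $\sum_{d\ne s}\ka_d=|\ka|-\ka_s$) must land exactly on the sign in~\eqref{eq:B_def_multisum}, and the combinatorial factor $\frac{(\ka_s-r)!}{\prod_d k_d!}$ from Leibniz must recombine with $\frac{1}{(\ka_s-r)!}$ and the derivative factorials into the product of binomials $\prod_{d\ne s}\binom{\ka_d+k_d-1}{k_d}$. Once this matches the multisum formula~\eqref{eq:B_def_multisum} for $B_{y,\ka,s,r}$ term by term, the proof is complete. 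It would be cleanest to phrase the final comparison against~\eqref{eq:B_def_multisum} rather than~\eqref{eq:B_def_via_hom}, since the multisum already exhibits exactly the structure produced by the Leibniz expansion; the equivalence with the compact homogeneous-polynomial form was recorded when~\eqref{eq:B_def_multisum} was derived.
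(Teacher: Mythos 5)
Your proposal follows essentially the same route as the paper's proof: it invokes the classical existence and uniqueness of the partial fraction decomposition, extracts $c_{s,r}$ as the order-$(\ka_s-r)$ Taylor coefficient at $y_s$ of the regular factor $\prod_{d\ne s}(t-y_d)^{-\ka_d}$ (the paper's $F_s$), computes that derivative via the Leibniz rule~\eqref{eq:Leibniz_rule_deriv_product} together with Lemmas~\ref{lem:derivative_of_special_product} and~\ref{lem: derivative_partial_fraction}, and identifies the result with~\eqref{eq:B_def_multisum} — exactly the paper's argument. The one detail to state precisely is that the sign $(-1)^{|\ka|-\ka_s}$ comes from combining the $(-1)^{k_d}$ supplied by Lemma~\ref{lem: derivative_partial_fraction} with the $(-1)^{\ka_d+k_d}$ from rewriting $(y_s-y_d)^{-(\ka_d+k_d)}$ as $(-1)^{\ka_d+k_d}(y_d-y_s)^{-(\ka_d+k_d)}$, not from the latter factors alone; with that correction the bookkeeping closes as you anticipated.
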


\begin{proof}
Let $f(t)$ be the left-hand side of~\eqref{eq:decomposition_in_simple_fractions_B}.
It is a well-known fact
(see~\cite[Chapter~4, Theorem~4.2]{Lang2005}) that $f$ admits a unique decomposition of the form
\begin{equation}
\label{eq:indefinite_decomposition_in_simple_fractions}
f(t)
= \sum_{s=1}^{n} \sum_{r=1}^{\ka_s}
\frac{c_{s,r}}{(t-y_s)^{r}},
\end{equation}
where $c_{s,r}$ are some coefficients depending on $y$, $\ka$, $s$, and $r$.
Our objective is to find these coefficients.
In the rest of the proof,
we fix $s$ in $\{1,\ldots,n\}$.
Let
\begin{equation}
\label{eq:Gs_lhs}
F_s(t)
\eqdef (t-y_s)^{\ka_s} f(t)
= \prod_{j\in\{1,\ldots,n\}\setminus\{s\}}
\frac{1}{(t-y_j)^{\ka_j}}.
\end{equation}
Due to~\eqref{eq:indefinite_decomposition_in_simple_fractions},
\begin{align*}
F_s(t)
= \sum_{j=1}^{\ka_s}
c_{s,j}(t-y_s)^{\ka_s-j}
+ (t-y_s)^{\ka_s}H_s(t),\qquad
H_s(t)\eqdef
\sum_{\substack{d\in \{1,\ldots,n\}\\ d \neq s}} \; \sum_{r=1}^{\ka_d}
\frac{c_{d,r}}{(t-y_d)^r}.
\end{align*}
Given $m$ with $0\le m\le \ka_s-1$,
we compute $F_s^{(m)}(y_s)$ using Lemma~\ref{lem:derivative_of_special_product}:
\[
F_s^{(m)}(y_s)
= m!\,c_{s,\ka_s-m}.
\]
On the other hand, by~\eqref{eq:Gs_lhs}, \eqref{eq:Leibniz_rule_deriv_product}, and Lemma~\ref{lem: derivative_partial_fraction},
\[
F_s^{(m)}(y_s)
= \sum_{\sum_{j\neq s}k_j=m} \;
\frac{m!}{\prod_{\substack{1\le d\le n\\d\ne s}}k_d!} \;
\prod_{\substack{1\le d\le n \\ d \ne s}}
\frac{(-1)^{k_d}\,(\ka_d + k_d-1)!}{(\ka_d-1)!\,(y_s-y_d)^{\ka_d + k_d}}.
\]
Comparing to~\eqref{eq:B_def_multisum}
we see that
\begin{equation}
\label{eq:Gs_deriv}
F_s^{(m)}(y_s)
=m!\,B_{y,\ka,s,\ka_s-m}.
\end{equation}
Now we substitute $m=\ka_s-r$, where $1\le r\le \ka_s$,
and conclude that $c_{s,r}=B_{y,\ka,s,r}$.
\end{proof}

The following result is closely related to
Theorem~\ref{thm:decomposition_in_simple_fractions_B};
it is contained in the proof above.

\begin{prop}
\label{prop: derivative_F_in_terms_of_B}
Let $\ka\in\bN^n$
and $s\in\{1,\ldots,n\}$.
Denote by $F_s$ the rational function
\[
F_s(t)
\eqdef
\prod_{j\in\{1,\ldots,n\}\setminus\{s\}}
\frac{1}{(t-y_j)^{\ka_j}}.
\]
Then for every $m$ in $\{1,\ldots,\ka_s\}$,
the following identity holds in
$\bF(y_1,\ldots,y_n)$:
\begin{equation}\label{eq:special_fraction_derivative}
F_s^{(m)}(y_s) = m!\;B_{y,\ka,s,\ka_s-m}.
\end{equation}
\end{prop}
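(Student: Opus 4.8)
The plan is to recognize that this proposition merely isolates equation~\eqref{eq:Gs_deriv}, which was already established inside the proof of Theorem~\ref{thm:decomposition_in_simple_fractions_B}, and to present that computation as a standalone argument. The crucial preliminary observation is that $y_s$ is \emph{not} a pole of $F_s$, because the factor $(t-y_s)^{-\ka_s}$ is omitted from the defining product; hence $F_s$ is regular at $y_s$ and the derivatives $F_s^{(m)}(y_s)$ are well-defined elements of $\bF(y_1,\ldots,y_n)$.

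First I would apply the general Leibniz rule~\eqref{eq:Leibniz_rule_deriv_product} to the product $F_s=\prod_{d\ne s}(t-y_d)^{-\ka_d}$, expressing $F_s^{(m)}$ as a multinomial sum over tuples $(k_d)_{d\ne s}$ with $\sum_{d\ne s}k_d=m$. Next I would differentiate each factor using Lemma~\ref{lem: derivative_partial_fraction}, which supplies the contribution $(-1)^{k_d}(\ka_d+k_d-1)!/((\ka_d-1)!\,(t-y_d)^{\ka_d+k_d})$, and then substitute $t=y_s$. Combining the $1/k_d!$ coming from the multinomial coefficient with the factorial ratio turns each factor into the binomial coefficient $\binom{\ka_d+k_d-1}{k_d}$ appearing in~\eqref{eq:B_def_multisum}.

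The only point requiring care is the sign. Rewriting each $(y_s-y_d)^{\ka_d+k_d}$ as $(-1)^{\ka_d+k_d}(y_d-y_s)^{\ka_d+k_d}$ produces the sign $(-1)^{\ka_d+k_d}$, which, together with the $(-1)^{k_d}$ coming from the derivative, combines to $(-1)^{\ka_d+2k_d}=(-1)^{\ka_d}$ for each $d\ne s$; the product over all $d\ne s$ is then $(-1)^{\sum_{d\ne s}\ka_d}=(-1)^{|\ka|-\ka_s}$, exactly the prefactor in~\eqref{eq:B_def_multisum}. After this simplification, $F_s^{(m)}(y_s)$ equals $m!$ times the multisum defining $B_{y,\ka,s,r}$ with $r=\ka_s-m$ (for which the summation constraint $\sum_{d\ne s}k_d=\ka_s-r$ becomes $\sum_{d\ne s}k_d=m$), which is~\eqref{eq:special_fraction_derivative}. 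I anticipate no real obstacle: the Leibniz computation imposes no upper bound on $m$, so the same derivation covers the entire range $m\in\{1,\ldots,\ka_s\}$ at once (with $B_{y,\ka,s,0}$ read off from the same formula~\eqref{eq:B_def_multisum} when $m=\ka_s$), and the argument is ultimately a repackaging of a step already carried out in the proof of Theorem~\ref{thm:decomposition_in_simple_fractions_B}.
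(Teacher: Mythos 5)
Your proposal is correct and follows essentially the same route as the paper: the paper's proof of this proposition is precisely the computation~\eqref{eq:Gs_deriv} inside the proof of Theorem~\ref{thm:decomposition_in_simple_fractions_B}, namely applying the general Leibniz rule~\eqref{eq:Leibniz_rule_deriv_product} together with Lemma~\ref{lem: derivative_partial_fraction}, evaluating at $t=y_s$, and matching the result (including the sign bookkeeping $(-1)^{\ka_d+2k_d}=(-1)^{\ka_d}$) against the multisum~\eqref{eq:B_def_multisum}. Your remark about the boundary index (reading $B_{y,\ka,s,0}$ off from~\eqref{eq:B_def_multisum} when $m=\ka_s$) is a fair way to handle an endpoint that the paper's own range $0\le m\le\ka_s-1$ in the theorem's proof does not literally cover.
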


\begin{thm}
\label{thm:decomposition_in_simple_fractions_A}
Let $\ka\in\bN^n$.
The following identity holds in
$\bF(y_1,\ldots,y_n,t)$:
\begin{equation}
\label{eq:decomposition_in_simple_fractions_A}
\frac{1}{\displaystyle \prod_{j=1}^{n}(1-y_j t)^{\ka_j}} = \sum_{s=1}^{n} \sum_{r=1}^{\ka_s}\frac{A_{y, \ka,s,r}}{(1-y_s t)^{r}}.
\end{equation}
\end{thm}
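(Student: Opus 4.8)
The plan is to deduce Theorem~\ref{thm:decomposition_in_simple_fractions_A} from the already-proven Theorem~\ref{thm:decomposition_in_simple_fractions_B} by a change of variables, rather than repeating the derivative computation. The key observation is that the two rational identities are related by the substitution $t\mapsto 1/t$ together with the passage from $y$ to $1/y$, and that this correspondence is exactly the one recorded in Proposition~\ref{prop:A_via_B_trivial} relating $A_{y,\ka,s,r}$ to $B_{1/y,\ka,s,r}$. First I would start from~\eqref{eq:decomposition_in_simple_fractions_B} written for the points $1/y_1,\ldots,1/y_n$ in place of $y_1,\ldots,y_n$, namely
\begin{equation}
\label{eq:B_at_inverse_points}
\frac{1}{\displaystyle\prod_{j=1}^n\bigl(t-\tfrac{1}{y_j}\bigr)^{\ka_j}}
=\sum_{s=1}^n\sum_{r=1}^{\ka_s}
\frac{B_{1/y,\ka,s,r}}{\bigl(t-\tfrac{1}{y_s}\bigr)^r}.
\end{equation}

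Next I would manipulate both sides of~\eqref{eq:B_at_inverse_points} into the shape of~\eqref{eq:decomposition_in_simple_fractions_A}. On the left, using $t-\tfrac{1}{y_j}=\tfrac{1}{t}\,(1-\tfrac{1}{y_j\,t})\cdot$ (a suitable monomial rearrangement), one rewrites each factor $\bigl(t-\tfrac1{y_j}\bigr)^{\ka_j}$ so that, after clearing the resulting powers of $t$ and of the $y_j$, the product becomes a constant multiple of $\prod_{j=1}^n(1-y_j t)^{\ka_j}$ with the roles of $t$ and $1/t$ interchanged. Concretely I would perform the substitution $t\mapsto 1/t$ throughout~\eqref{eq:B_at_inverse_points}, which turns each $\bigl(\tfrac1t-\tfrac1{y_j}\bigr)$ into $\tfrac{1}{t\,y_j}(y_j-t)$ and, after collecting, produces factors of the form $(1-y_j t)$ up to explicit powers of $y_j$ and $t$. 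The same substitution applied to the right-hand side converts each denominator $\bigl(\tfrac1t-\tfrac1{y_s}\bigr)^r$ into $(1-y_s t)^r$ times an explicit monomial in $t$ and $y_s$.

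The bookkeeping of these monomial prefactors is the step I expect to be the main obstacle: one must track the exact powers of $t$, of $y_s$, and of the individual $y_d$ that appear on both sides, and check that after dividing through by the common overall monomial the coefficient multiplying $B_{1/y,\ka,s,r}/(1-y_s t)^r$ is precisely the factor $(-1)^{|\ka|-r}\,y_s^{r-\ka_s}\prod_{d\neq s}y_d^{-\ka_d}$ appearing in~\eqref{eq:A_via_B_simple}. Once this matches, Proposition~\ref{prop:A_via_B_trivial} identifies that coefficient with $A_{y,\ka,s,r}$, and the identity collapses to~\eqref{eq:decomposition_in_simple_fractions_A}. All the computations take place in $\bF(y_1,\ldots,y_n,t)$, so the substitution $t\mapsto 1/t$ and the reparametrization $y\mapsto 1/y$ are legitimate field automorphisms, and no convergence issues arise. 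As an alternative that sidesteps the monomial bookkeeping, I could instead mimic the proof of Theorem~\ref{thm:decomposition_in_simple_fractions_B} directly: assert the existence and uniqueness of a decomposition $\sum_{s,r} c_{s,r}(1-y_s t)^{-r}$, multiply by $(1-y_s t)^{\ka_s}$, and extract the coefficients $c_{s,r}$ by evaluating derivatives at $t=1/y_s$; comparing with~\eqref{eq:A_def_multisum} would then yield $c_{s,r}=A_{y,\ka,s,r}$. I would favor the change-of-variables route, since it reuses Theorem~\ref{thm:decomposition_in_simple_fractions_B} wholesale and makes the $A\leftrightarrow B$ duality transparent.
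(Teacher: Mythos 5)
Your high-level plan (deduce Theorem~\ref{thm:decomposition_in_simple_fractions_A} from Theorem~\ref{thm:decomposition_in_simple_fractions_B} applied at the points $1/y_1,\ldots,1/y_n$, then identify the coefficients via Proposition~\ref{prop:A_via_B_trivial}) is exactly the paper's proof. However, the concrete route you say you favor contains a genuine error: after already replacing the points $y_j$ by $1/y_j$, you additionally substitute $t\mapsto 1/t$. This double inversion is circular. Indeed, $\tfrac1t-\tfrac1{y_j}=\tfrac{y_j-t}{t\,y_j}$, and $y_j-t=-(t-y_j)$ is a monomial multiple of $t-y_j$, \emph{not} of $1-y_jt$: these are distinct irreducible factors (one vanishes at $t=y_j$, the other at $t=1/y_j$), so neither is a monomial times the other. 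Hence your claim that collecting powers of $t$ and $y_j$ ``produces factors of the form $(1-y_jt)$'' is false; what the double substitution actually yields is the $B$-type decomposition at the original points $y$, up to monomial prefactors --- you return to where you started, and the identity~\eqref{eq:decomposition_in_simple_fractions_A} never appears.

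The repair is simply to leave $t$ untouched, which is what the paper does: apply~\eqref{eq:decomposition_in_simple_fractions_B} at the points $1/y_j$ and rewrite each factor using
\[
\frac{1}{t-\frac{1}{y_j}}=-\frac{y_j}{1-y_jt}.
\]
This already produces denominators $(1-y_st)^r$ in the same variable $t$; collecting the monomials, the coefficient of $(1-y_st)^{-r}$ becomes
$(-1)^{|\ka|-r}\,y_s^{r-\ka_s}\bigl(\prod_{d\neq s}y_d^{-\ka_d}\bigr)\,B_{1/y,\ka,s,r}$,
which is precisely $A_{y,\ka,s,r}$ by~\eqref{eq:A_via_B_simple}. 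Your fallback option --- mimicking the proof of Theorem~\ref{thm:decomposition_in_simple_fractions_B} directly, i.e., invoking uniqueness of the partial-fraction decomposition (legitimate here because $(1-y_st)^r=(-y_s)^r(t-1/y_s)^r$), multiplying by $(1-y_st)^{\ka_s}$, extracting Taylor coefficients at $t=1/y_s$, and comparing with~\eqref{eq:A_def_multisum} --- is sound, but it redoes the derivative computation that Theorem~\ref{thm:decomposition_in_simple_fractions_B} already encapsulates; the corrected change-of-variables argument is shorter.
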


\begin{proof}
It follows from Theorem~\ref{thm:decomposition_in_simple_fractions_B} applied to
$1/y_1,\ldots,1/y_n$
instead of $y_1,\ldots,y_n$,
by using the identity
\[
\frac{1}{t - \frac{1}{y_j}}
= - \frac{y_j}{1 - y_jt}
\]
and Proposition~\ref{prop:A_via_B_trivial}.
\end{proof}

\begin{cor}
\label{cor:decomposition1_simple_poles_in_simple_fractions}
Then following identity holds in $\bF(y_1,\ldots,y_n,t)$:
\begin{equation}
\label{eq:decomposition1_simple_poles_in_simple_fractions}
\frac{1}{\displaystyle \prod_{j=1}^{n}(1-y_j t)}
= \sum_{s=1}^{n}
\frac{y_s^{n-1}}{\displaystyle\prod_{j\in\{1,\ldots,n\}\setminus\{s\}}\,
\frac{1}{1-y_s t}}.
\end{equation}
\end{cor}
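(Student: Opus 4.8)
The plan is to read the corollary as the $\ka=(1,\ldots,1)$ instance of Theorem~\ref{thm:decomposition_in_simple_fractions_A} and then simplify the single surviving coefficient; the resulting right-hand side will coincide with the decomposition~\eqref{eq:decomp_simple_fraction1} already obtained in Section~\ref{sec:basic_facts_hom}. Concretely, I would set every multiplicity equal to one, so that $|\ka|=n$ and, for each $s$, the inner sum $\sum_{r=1}^{\ka_s}$ in~\eqref{eq:decomposition_in_simple_fractions_A} reduces to the single term $r=1$. This immediately yields
\[
\frac{1}{\prod_{j=1}^n(1-y_jt)}=\sum_{s=1}^n\frac{A_{y,\ka,s,1}}{1-y_st},
\]
and it remains only to evaluate $A_{y,\ka,s,1}$ in closed form.

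For that I would substitute $\ka_s=1$ and $r=1$ into the defining formula~\eqref{eq:A_def_via_hom}. The complete homogeneous factor has degree $\ka_s-r=0$, hence equals $1$; the exponent in the numerator is $|\ka|-\ka_s=n-1$; and, since every $\ka_d=1$, the denominator collapses to $\prod_{d\ne s}(y_d-y_s)$. Thus
\[
A_{y,\ka,s,1}=\frac{(-y_s)^{n-1}}{\prod_{d\ne s}(y_d-y_s)}.
\]

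The final step is pure sign bookkeeping: writing $(-y_s)^{n-1}=(-1)^{n-1}y_s^{n-1}$ and $\prod_{d\ne s}(y_d-y_s)=(-1)^{n-1}\prod_{d\ne s}(y_s-y_d)$, the two factors $(-1)^{n-1}$ cancel, so that $A_{y,\ka,s,1}=y_s^{n-1}/\prod_{d\ne s}(y_s-y_d)$. Feeding this into the displayed sum produces exactly the intended right-hand side and reproduces~\eqref{eq:decomp_simple_fraction1}. There is essentially no obstacle here, since the argument is a direct specialization; the only point requiring attention is the parity factor $(-1)^{n-1}$ appearing in both numerator and denominator, which cancels cleanly and leaves no dependence on the parity of $n$. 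As an alternative one could instead specialize Theorem~\ref{thm:decomposition_in_simple_fractions_B} and pass through Proposition~\ref{prop:A_via_B_trivial}, but the direct route above is the shortest.
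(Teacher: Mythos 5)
Your proposal is correct and follows exactly the paper's route: the paper's own proof simply declares the corollary to be the particular case $\ka_1=\cdots=\ka_n=1$ of Theorem~\ref{thm:decomposition_in_simple_fractions_A} and notes that this case is well known. You additionally carry out the explicit evaluation $A_{y,\ka,s,1}=(-y_s)^{n-1}/\prod_{d\ne s}(y_d-y_s)=y_s^{n-1}/\prod_{d\ne s}(y_s-y_d)$, which the paper leaves implicit (and which, incidentally, recovers the intended form of the right-hand side, matching~\eqref{eq:decomp_simple_fraction1}, rather than the garbled display printed in the corollary's statement).
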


\begin{proof}
This is a particular case of Theorem~\ref{thm:decomposition_in_simple_fractions_A},
with $\ka_1=\cdots=\ka_n=1$.
This particular case is well known.
\end{proof}

\medskip

\section{\texorpdfstring{Expansions of $\boldsymbol{\hom_m(y^{[\ka]})}$}{Expansions of hm(ykappa)}}
\label{sec:hom_rep_expansion}

The main results of this section are Theorems~\ref{thm:hom_rep_via_powers_A} and~\ref{thm:hom_rep_via_powers_B}.

In the proof of Theorem~\ref{thm:hom_rep_via_powers_A},
we will use the following well-known formula for the sum of the geometric progression~\cite[Equation 5.56]{GrahamKnuthPatashnik1994}:
\begin{equation}
\label{eq:negative_power_expansion}
\frac{1}{(1-y_s t)^{r}}
=
\sum_{m=0}^\infty
\binom{r + m -1}{m}
y_s^m t^m.
\end{equation}

\begin{thm}
\label{thm:hom_rep_via_powers_A}
Let $\ka\in\bN^n$.
Then the following identity holds
in $\bF(y_1,\ldots,y_n)$:
\begin{equation}
\label{eq:hom_rep_via_powers_A}
\hom_m(y^{[\ka]})
=
\sum_{s=1}^{n} \sum_{r=1}^{\ka_s}
A_{y,\ka,s,r}
\binom{m + r - 1}{r - 1} y_s^m.
\end{equation}
As a consecuence,
\eqref{eq:hom_rep_via_powers_A}
also holds if $y_1,\ldots,y_n$
are some pairwise different elements of $\bF$.
\end{thm}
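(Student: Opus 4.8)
The plan is to pass from the partial fraction decomposition of the generating function (Theorem~\ref{thm:decomposition_in_simple_fractions_A}) to the coefficient-level identity by expanding each elementary fraction as a power series in $t$ and comparing coefficients of $t^m$. The key observation is that $\Hom(y^{[\ka]})(t)$, by the explicit formula~\eqref{eq:Homgen_explicit} applied to the list $x = y^{[\ka]}$, equals exactly the left-hand side of~\eqref{eq:decomposition_in_simple_fractions_A}, namely $1/\prod_{j=1}^n(1-y_j t)^{\ka_j}$, since the variable $y_j$ appears $\ka_j$ times in $y^{[\ka]}$.

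First I would write
\[
\sum_{m=0}^\infty \hom_m(y^{[\ka]})\,t^m
= \Hom(y^{[\ka]})(t)
= \frac{1}{\displaystyle\prod_{j=1}^{n}(1-y_j t)^{\ka_j}}.
\]
Then I would substitute the right-hand side of~\eqref{eq:decomposition_in_simple_fractions_A}, giving
\[
\sum_{m=0}^\infty \hom_m(y^{[\ka]})\,t^m
= \sum_{s=1}^{n}\sum_{r=1}^{\ka_s} A_{y,\ka,s,r}\,\frac{1}{(1-y_s t)^{r}}.
\]
Next I would expand each term $1/(1-y_s t)^{r}$ using the geometric-progression formula~\eqref{eq:negative_power_expansion}, which yields $\sum_{m=0}^\infty \binom{r+m-1}{m} y_s^m t^m$, and interchange the finite outer sums with the infinite series in $t$. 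This is legitimate because we are working with formal power series and the outer sums over $s$ and $r$ are finite. Collecting the coefficient of $t^m$ on both sides gives
\[
\hom_m(y^{[\ka]})
= \sum_{s=1}^{n}\sum_{r=1}^{\ka_s} A_{y,\ka,s,r}\,\binom{r+m-1}{m}\,y_s^m,
\]
and I would note that $\binom{r+m-1}{m} = \binom{r+m-1}{r-1}$ to match the stated form.

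The main subtlety, rather than a genuine obstacle, is bookkeeping about where each identity lives. The decomposition~\eqref{eq:decomposition_in_simple_fractions_A} is an identity in $\bF(y_1,\ldots,y_n,t)$, whereas~\eqref{eq:negative_power_expansion} is an expansion in the ring of formal power series over $\bF(y_1,\ldots,y_n)$; I would therefore regard both sides as elements of $\bF(y_1,\ldots,y_n)((t))$ and compare coefficients there, which is valid since coefficient extraction is well defined and two formal series agree iff all their coefficients agree. The final sentence of the theorem follows immediately: once the identity~\eqref{eq:hom_rep_via_powers_A} is established in $\bF(y_1,\ldots,y_n)$, it is an equality of rational functions, so it may be evaluated at any pairwise different $y_1,\ldots,y_n$ in $\bF$ at which the denominators of the $A_{y,\ka,s,r}$ do not vanish; since those denominators are products of $(y_d-y_s)$, pairwise distinctness guarantees this, and the evaluation homomorphism (Remark~\ref{rem:polynomials}) transports the identity to $\bF$.
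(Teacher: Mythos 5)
Your proof is correct and takes essentially the same route as the paper's: both start from the partial fraction decomposition of Theorem~\ref{thm:decomposition_in_simple_fractions_A}, expand each elementary fraction via~\eqref{eq:negative_power_expansion}, and equate coefficients of $t^m$. The only difference is the bookkeeping device: the paper multiplies through by $\Om_\ka(y,t)=\prod_{j=1}^n(1-y_jt)^{\ka_j}$ so that every step lives in $R[t]$ or $R[[t]]$ with $R=\bF(y_1,\ldots,y_n)$, cancelling $\Om_\ka$ at the end because $R[[t]]$ is an integral domain, whereas you work directly in $\bF(y_1,\ldots,y_n)((t))$ via the canonical embedding of rational functions into formal Laurent series --- both justifications of the same passage are valid.
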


\begin{proof}
Let $R\eqdef\bF(y_1,\ldots,y_n)$.
We are going to deal with elements of $R[[t]]$
(formal series)
and elements of $R(t)$
which we identify with $\bF(y_1,\ldots,y_n,t)$
(rational functions).
Notice that $R[t]$
is naturally embedded in both $R[[t]]$ and $R(t)$.
To justify the passes from formal series to rational functions and vice versa,
we prefer to multiply them by the following polynomial:
\[
\Om_\ka(y,t)
\eqdef
\prod_{j=1}^n (1-y_j t)^{\ka_j}.
\]
Thanks to this factor, all sides
in the following chains of equalities
simplify to elements of $R[t]$.

We start with the generating function $\Hom(x)(t)$ with $y^{[\ka]}$ instead of $x$,
multiply it by $\Om_\ka(y,t)$,
and use~\eqref{eq:Homgen_explicit}:
\[
\Om_\ka(y,t)\;
\sum_{m=0}^\infty \hom_m(y^{[\ka]})t^m
=
\Om_\ka(y,t)\;
\Hom(y^{[\ka]})(t)
=1.
\]
Next, we apply the partial fraction decomposition from Theorem~\ref{thm:decomposition_in_simple_fractions_A}:
\[
1
=
\Om_\ka(y,t)\;
\sum_{s=1}^n
\sum_{r=1}^{\ka_s}
\frac{A_{y, \ka, s,r}}{(1-y_st)^{r}}
=
\sum_{s=1}^n
\sum_{r=1}^{\ka_s}
\frac{A_{y, \ka, s,r} \Om_\ka(y,t)}{(1-y_st)^{r}}.
\]
All summands in the last expression are elements of $R[t]$.
Using~\eqref{eq:negative_power_expansion},
we represent each summand via a formal series,
and after that we sum the obtained formal series:
\begin{align*}
\sum_{s=1}^n
\sum_{r=1}^{\ka_s}
\frac{A_{y, \ka, s,r} \Om_\ka(y,t)}{(1-y_st)^{r}}
&=
\sum_{s=1}^n
\sum_{r=1}^{\ka_s}
A_{y, \ka, s,r}
\Om_\ka(y,t)
\sum_{m=0}^\infty
\binom{r + m - 1}{m}y_s^m t^m
\\
&=
\Om_\ka(y,t)
\sum_{m=0}^\infty
\left( \sum_{s=1}^{n}\sum_{r=1}^{\ka_s}
A_{y, \ka, s,r}\binom{r + m - 1}{m}y_s^m\right)
t^m.
\end{align*}
Thereby, we have obtained the following identity in $R[[t]]$:
\[
\Om_\ka(y,t)
\sum_{k=0}^\infty \hom_k(y^{[\ka]})t^k
=
\Om_\ka(y,t)
\sum_{m=0}^\infty
\left( \sum_{s=1}^{n}\sum_{r=1}^{\ka_s}
A_{y, \ka, s,r}\binom{r + m - 1}{m}y_s^m\right)t^m.
\]
Since $R[[t]]$ is an integral domain,
the factor $\Om_\ka(y,t)$ can be canceled.
Finally, equating the coefficients of the formal series,
we get~\eqref{eq:hom_rep_via_powers_A}.
\end{proof}

We will derive another formula for $\hom_m(y^{[\ka]})$ using Corollary~\ref{cor:hom_rep_bialternant} and expanding the determinant of $G_{(m),\ka}(y)$ along the last row. We need the last column of the adjugate matrix of $G_{(m),\ka}(y)$.

\begin{prop}
\label{prop:adj_Gm_last_row}
Let $\ka\in\bN^n$, $N = |\ka|$, $m \in \bNz$, $k\in \{1,\ldots,N\}$.
Then the following identity holds in $\bF(y_1,\ldots,y_n)$:
\begin{equation}
\label{eq:adj_Gm_last_row}
\bigl(\adj G_{(m),\ka}(y)\bigr)_{k,N}
=
\bigl(V_\ka(y)^{-1}\bigr)_{k,N} \;
\det V_\ka(y).
\end{equation}
\end{prop}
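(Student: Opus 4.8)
The plan is to exploit the fact that the matrices $G_{(m),\ka}(y)$ and $V_\ka(y)$ coincide in their first $N-1$ rows and that the $(k,N)$-entry of the adjugate depends only on the minor obtained by deleting the last row.

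First I would recall from Remark~\ref{rem:G_m} that, for $1\le j\le N-1$, the entry $G_{(m),\ka}(y)_{j,k}$ equals $\binom{j-1}{r-1}\hom_{j-r}(y_q)$, where $(q,r)=\ga_\ka(k)$. Applying Definition~\ref{defn:G} with the empty partition (so that $\la_{N+1-j}=0$ for every $j$) shows that exactly the same formula describes $V_\ka(y)_{j,k}$. Since $\hom_{j-r}(y_q)=y_q^{j-r}$ when $j-r\ge 0$ and vanishes otherwise (Proposition~\ref{prop:hom_one_variable_with_repetitions}), the two matrices have identical entries in rows $1,\ldots,N-1$; they differ only in the last row $j=N$.

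Next I would use the standard formula $\bigl(\adj M\bigr)_{k,N}=(-1)^{k+N}M_{N,k}$, where $M_{N,k}$ denotes the minor obtained by deleting row $N$ and column $k$ from $M$. This minor involves only rows $1,\ldots,N-1$, which are identical in $G_{(m),\ka}(y)$ and $V_\ka(y)$, so the two minors coincide and hence
\[
\bigl(\adj G_{(m),\ka}(y)\bigr)_{k,N}
=\bigl(\adj V_\ka(y)\bigr)_{k,N}.
\]

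Finally I would invoke the identity $\adj M=(\det M)\,M^{-1}$, which is valid in $\bF(y_1,\ldots,y_n)$ because $\det V_\ka(y)=\prod_{1\le j<k\le n}(y_k-y_j)^{\ka_j\ka_k}$ is nonzero by~\eqref{eq:det_confluent_Vandermonde}. This yields $\bigl(\adj V_\ka(y)\bigr)_{k,N}=\bigl(V_\ka(y)^{-1}\bigr)_{k,N}\det V_\ka(y)$, and combining it with the previous display gives~\eqref{eq:adj_Gm_last_row}. The one point demanding care is the verification that the first $N-1$ rows genuinely agree, hinging on the fact that $\la=(m)$ affects only the index $N+1-j=1$ and hence only the last row; everything else is routine linear algebra, so I anticipate no serious obstacle.
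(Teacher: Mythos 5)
Your proof is correct and follows essentially the same route as the paper's: observe that $G_{(m),\ka}(y)$ and $V_\ka(y)$ agree in rows $1,\ldots,N-1$, conclude that the $(N,k)$ cofactors (hence the last columns of the adjugates) coincide, and finish with $\adj V_\ka(y)=\det V_\ka(y)\,V_\ka(y)^{-1}$. You merely spell out two details the paper leaves implicit—the entrywise verification that the first $N-1$ rows coincide via Definition~\ref{defn:G} with $\la=\emptyset$, and the invertibility of $V_\ka(y)$ via~\eqref{eq:det_confluent_Vandermonde}—which is a sound elaboration rather than a different argument.
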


\begin{proof}
By Remark~\ref{rem:G_m}, the first $m-1$ rows of $G_{(m),\ka}(y)$ coincide with the corresponding rows of $V_\ka(y)$.
Therefore, the $(N,k)$ cofactor of $G_{(m),\ka}(y)$ is equal to the $(N,k)$ cofactor of $V_\ka(y)$. Equivalently, $\adj G_{(m),\ka}(y)$ and $\adj V_\ka(y)$ have the same last column.
Finally, the adjugate of $V_\ka(y)$ is the inverse of $V_\ka(y)$ multiplied by $\det V_\ka(y)$. 
\end{proof}

Mou\c{c}ouf and Zriaa~\cite[Theorem~2.2]{MoucoufZriaa2022}
gave a formula for the inverse of the confluent Vandermonde matrix
(see also algorithms and historial information in~\cite{HouPang2002,Respondek2011,Respondek2024}).
We will rewrite their result in a slightly different notation.

\begin{thm}[Mou\c{c}ouf and Zriaa, 2022]
\label{thm:Moucouf_inverse_matrix_confluent}
Let $\ka\in\bN^n$
and $y_1,\ldots,y_n$ be some variables.
Then $V_{\ka}(y)^{-1}$
has the following block structure: 
\[
V_{\ka}(y)^{-1}
=
\begin{bmatrix}
\mathcal{L}_{1}\\
\mathcal{L}_{2}\\
\vdots\\
\mathcal{L}_{n}
\end{bmatrix},
\]
where for every $s$ in $\{1,\ldots, n\}$,
the $\ka_s \times N$-matrix $\mathcal{L}_{s}$ is given by
\[
\mathcal{L}_{s} 
\eqdef
\left[
\frac{1}{(j - 1)!}L_{s,r - 1}^{(j - 1)} (0)
\right]_{1\leq r \leq \ka_s, 1\leq j \leq N},
\]
where, for each $p$ in $\{0, \ldots, \ka_s\}$,
\begin{equation}
\label{eq:def_L_function_Moucouf_Zriaa}
L_{s,p}(t)
\eqdef
\frac{(t - y_s)^{p}}{
F_s(t)}
\sum_{m = 0}^{\ka_s - p - 1}\frac{F_s^{(m)}(y_s)}{m!} \; (t - y_s)^{m}, 
\end{equation}
i.e.,
\begin{equation}
\label{eq:def_L_function_Moucouf_Zriaa2}
L_{s,p}(t)
=
\left(\prod_{\substack{1\leq q \leq n \\ q\neq s}} (t - y_q)^{\ka_q}\right)
(t - y_s)^{p}
\sum_{m = 0}^{\ka_s - p - 1}\frac{F_s^{(m)}(y_s)}{m!} \; (t - y_s)^{m}.
\end{equation}
\end{thm}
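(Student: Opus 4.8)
The plan is to verify directly that the matrix displayed in Theorem~\ref{thm:Moucouf_inverse_matrix_confluent} is a left inverse of $V_\ka(y)$; since $V_\ka(y)$ is square and its determinant~\eqref{eq:det_confluent_Vandermonde} is nonzero in $\bF(y_1,\ldots,y_n)$, any left inverse is automatically the two-sided inverse. I index the rows of the candidate matrix by pairs $(s,\rho)$ with $s\in\{1,\ldots,n\}$ and $\rho\in\{1,\ldots,\ka_s\}$, and write $p\eqdef\rho-1\in\{0,\ldots,\ka_s-1\}$. The $j$-th entry $\frac{1}{(j-1)!}L_{s,p}^{(j-1)}(0)$ of that row is exactly the coefficient of $t^{j-1}$ in the polynomial $L_{s,p}(t)$, which by~\eqref{eq:def_L_function_Moucouf_Zriaa} has degree $p+(N-\ka_s)+(\ka_s-p-1)=N-1$; so each $L_{s,p}$ is a genuine polynomial of degree $<N$ and the row indexing is consistent.

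Using the derivative form~\eqref{eq:G_def_deriv} of the confluent Vandermonde entries (with $\la=\emptyset$, so $V_\ka(y)_{j,(q,r)}=\frac{1}{(r-1)!}(t^{j-1})^{(r-1)}|_{t=y_q}$), the $((s,\rho),(q,r))$ entry of the product of the candidate matrix with $V_\ka(y)$ is $\sum_{j}[\text{coeff of }t^{j-1}\text{ in }L_{s,p}]\cdot\frac{1}{(r-1)!}(t^{j-1})^{(r-1)}|_{t=y_q}$, which collapses, by linearity of differentiation, to $\frac{1}{(r-1)!}L_{s,p}^{(r-1)}(y_q)$. Hence the whole theorem reduces to the single family of scalar identities
\[
\frac{1}{m!}\,L_{s,p}^{(m)}(y_q)=\delta_{sq}\,\delta_{pm}
\qquad\bigl(q\in\{1,\ldots,n\},\ m\in\{0,\ldots,\ka_q-1\}\bigr),
\]
one for each fixed $s$ and $p\in\{0,\ldots,\ka_s-1\}$. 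This is precisely the statement that the $L_{s,p}$ form the Hermite interpolation basis dual to the functionals $f\mapsto\frac{1}{m!}f^{(m)}(y_q)$, so Theorem~\ref{thm:Moucouf_inverse_matrix_confluent} is an instance of Hermite interpolation in disguise.

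To check these identities I split on whether $q=s$. For $q\ne s$, formula~\eqref{eq:def_L_function_Moucouf_Zriaa} shows that $L_{s,p}$ carries the factor $\prod_{j\ne s}(t-y_j)^{\ka_j}$, which vanishes to order $\ka_q$ at $y_q$; by Lemma~\ref{lem:derivative_of_special_product} all derivatives of order $m<\ka_q$ vanish at $y_q$, giving the off-diagonal $0$. For $q=s$ the key point is that $T_p(t)\eqdef\sum_{m=0}^{\ka_s-p-1}\frac{F_s^{(m)}(y_s)}{m!}(t-y_s)^m$ is exactly the degree-$(\ka_s-p-1)$ Taylor polynomial of $F_s$ at $y_s$, where $F_s$ is invertible near $y_s$ since $y_s\ne y_j$ for $j\ne s$. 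Consequently $F_s-T_p$ is divisible by $(t-y_s)^{\ka_s-p}$, so $T_p/F_s=1+O\bigl((t-y_s)^{\ka_s-p}\bigr)$ and
\[
L_{s,p}(t)=(t-y_s)^p\,\frac{T_p(t)}{F_s(t)}=(t-y_s)^p+O\bigl((t-y_s)^{\ka_s}\bigr).
\]
Reading off the coefficient of $(t-y_s)^m$ for $m\le\ka_s-1<\ka_s$ yields $\frac{1}{m!}L_{s,p}^{(m)}(y_s)=\delta_{pm}$, which completes the diagonal block.

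I expect the diagonal block $(q=s)$ to be the only real obstacle: one must pin down the exact truncation order $\ka_s-p-1$ and argue that the Taylor correction $T_p$ cancels all lower-order discrepancies while leaving the leading term $(t-y_s)^p$ untouched. Everything becomes rigorous once ``$O\bigl((t-y_s)^{k}\bigr)$'' is read as ``divisible by $(t-y_s)^{k}$'' in the localization of the coefficient field at $y_s$, i.e. working with formal power series in $t-y_s$; the derivatives $F_s^{(m)}(y_s)$ appearing in $T_p$ are the very ones computed in Proposition~\ref{prop: derivative_F_in_terms_of_B}, which ties the formula back to the coefficients $B_{y,\ka,s,r}$.
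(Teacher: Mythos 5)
Your proof is correct, but there is nothing in the paper to compare it against: the paper does not prove Theorem~\ref{thm:Moucouf_inverse_matrix_confluent} at all --- it imports the result (in adapted notation) from Mou\c{c}ouf and Zriaa \cite{MoucoufZriaa2022} and only consumes it, via Corollary~\ref{cor:last_column_of_inverse_confluent_Vandermonde_matrix_S_version}, to extract the last column of $V_\ka(y)^{-1}$. Your argument is therefore a self-contained replacement for the external citation, and it holds up. By \eqref{eq:G_def_deriv}, the $(j,k)$ entry of $V_\ka(y)$ with $k=\rho_\ka(q,r)$ is $\frac{1}{(r-1)!}(t^{j-1})^{(r-1)}\big|_{t=y_q}$, so --- because $\deg L_{s,p}\le N-1$, which lets you write $L_{s,p}(t)$ exactly as $\sum_{j=1}^{N}c_j t^{j-1}$ --- the product of the candidate matrix with $V_\ka(y)$ has entries $\frac{1}{(r-1)!}L_{s,p}^{(r-1)}(y_q)$, and the theorem reduces to the Hermite-duality identities $\frac{1}{m!}L_{s,p}^{(m)}(y_q)=\delta_{sq}\delta_{pm}$. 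The off-diagonal blocks follow from Lemma~\ref{lem:derivative_of_special_product} exactly as you say; for the diagonal blocks, your Taylor-truncation argument is rigorous in the local ring at $t=y_s$, where $F_s$ is a unit since $F_s(y_s)=\prod_{j\ne s}(y_s-y_j)^{-\ka_j}\neq 0$: the polynomial $L_{s,p}-(t-y_s)^p$ lies in the ideal generated by $(t-y_s)^{\ka_s}$, hence is divisible by $(t-y_s)^{\ka_s}$ in $\bF(y_1,\ldots,y_n)[t]$, and Lemma~\ref{lem:derivative_of_special_product} applies once more. Two minor points: the degree of $L_{s,p}$ is in general only $\le N-1$, not exactly $N-1$ (for $n=1$ one gets $L_{1,p}=(t-y_1)^p$), but the inequality is all your argument needs; and the passage from left inverse to two-sided inverse is justified since \eqref{eq:det_confluent_Vandermonde} is a nonzero element of $\bF(y_1,\ldots,y_n)$. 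What your route buys is independence of the paper's second main result, Theorem~\ref{thm:hom_rep_via_powers_B}, from the reference \cite{MoucoufZriaa2022}; it also fits the paper's framework seamlessly, since the derivatives $F_s^{(m)}(y_s)$ entering your $T_p$ are exactly those computed in Proposition~\ref{prop: derivative_F_in_terms_of_B}, i.e.\ the coefficients $B_{y,\ka,s,r}$.
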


It turns out that the last column of the inverse of  $V_\ka(y)$ can be expressed in terms of $B_{y, \ka, s , r}$
and therefore in terms of complete homogeneous polynomials.

\begin{cor}[last column of the inverse of the confluent Vandermonde matrix]
\label{cor:last_column_of_inverse_confluent_Vandermonde_matrix_S_version}
Let $\ka\in\bN^n$.
Then for every $s$ in $\{1, \ldots, n\}$
and every $r$ in $\{1, \ldots, \ka_s\}$,
the following identity holds in $\bF(y_1,\ldots,y_n)$:
\[
\left(V_{\ka}(y)^{-1}\right)_{\si(\ka)_{s - 1} + r, N}
=
B_{y,\ka,s,r}.
\]
\end{cor}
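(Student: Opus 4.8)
The plan is to read off the entry $\left(V_\ka(y)^{-1}\right)_{\si(\ka)_{s-1}+r,\,N}$ directly from the Mou\c{c}ouf--Zriaa formula (Theorem~\ref{thm:Moucouf_inverse_matrix_confluent}). In that formula the rows of $V_\ka(y)^{-1}$ are grouped into blocks $\mathcal{L}_1,\ldots,\mathcal{L}_n$, with $\mathcal{L}_s$ occupying the rows indexed $\si(\ka)_{s-1}+1,\ldots,\si(\ka)_{s-1}+\ka_s$. Hence the row $\si(\ka)_{s-1}+r$ is exactly the $r$th row of $\mathcal{L}_s$, and its last entry (the case $j=N$) is
\[
\left(V_\ka(y)^{-1}\right)_{\si(\ka)_{s-1}+r,\,N}
=\frac{1}{(N-1)!}\,L_{s,r-1}^{(N-1)}(0).
\]
So it suffices to evaluate this single scaled derivative at $0$, where $p=r-1$.

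The key observation is that $L_{s,r-1}$ is a polynomial in $t$ of degree at most $N-1$. Indeed, using the form~\eqref{eq:def_L_function_Moucouf_Zriaa2} with $p=r-1$, I would write
\[
L_{s,r-1}(t)
=(t-y_s)^{r-1}\prod_{q\neq s}(t-y_q)^{\ka_q}
\sum_{m=0}^{\ka_s-r}\frac{F_s^{(m)}(y_s)}{m!}\,(t-y_s)^m ,
\]
whose three factors have degrees $r-1$, $N-\ka_s$, and at most $\ka_s-r$, summing to at most $N-1$. For a polynomial of degree $\le N-1$, the quantity $\tfrac{1}{(N-1)!}L_{s,r-1}^{(N-1)}(0)$ is precisely the coefficient of $t^{N-1}$, i.e.\ its leading coefficient. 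Thus the whole computation reduces to extracting that single coefficient.

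To extract it, I note that the target degree $N-1$ is the maximal possible degree of the product, so only the top-degree part of each factor can contribute. The first two factors are monic, hence each contributes $1$; the sum contributes the coefficient of its top power $(t-y_s)^{\ka_s-r}$, namely $F_s^{(\ka_s-r)}(y_s)/(\ka_s-r)!$. By Proposition~\ref{prop: derivative_F_in_terms_of_B} (with $m=\ka_s-r$), this equals $B_{y,\ka,s,r}$, which yields the claim. The main bookkeeping obstacle is precisely this degree count together with the index matching $p=r-1$, $j=N$ and the block offset $\si(\ka)_{s-1}$; once these are pinned down, the identification of the leading coefficient with $B_{y,\ka,s,r}$ is immediate. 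No separate treatment of the degenerate case $B_{y,\ka,s,r}=0$ is needed, since the coefficient of $t^{N-1}$ is well defined regardless of whether the degree actually attains $N-1$.
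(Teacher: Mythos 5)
Your proof is correct and follows essentially the same route as the paper's: read off the entry $\frac{1}{(N-1)!}L_{s,r-1}^{(N-1)}(0)$ from Theorem~\ref{thm:Moucouf_inverse_matrix_confluent}, identify it as the coefficient of $t^{N-1}$ in the polynomial $L_{s,r-1}$, compute that leading coefficient as $\frac{1}{(\ka_s-r)!}F_s^{(\ka_s-r)}(y_s)$, and conclude via Proposition~\ref{prop: derivative_F_in_terms_of_B}. Your extra care with the degree count and the monic top-degree factors only makes explicit what the paper leaves implicit.
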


\begin{proof} 
Let $k = \rho_\ka(s,r) = \sigma(\ka)_{s - 1} + r$. According to the constructions of Section~\ref{sec:repeating},
$k$ is the ``global index'' of the component $r$
in block $s$. 
So, by Theorem~\ref{thm:Moucouf_inverse_matrix_confluent}, the component $(k,N)$ of $V_\ka(y)^{-1}$ is the component $(r, N)$ of $\mathcal{L}_s$, which can be computed via the $(N-1)$th derivative of $L_{s,r-1}$: 
\[
\left(V_{\ka}(y)^{-1}\right)_{k, N} 
=
(\mathcal{L}_s)_{r,N}
=
\frac{1}{(N - 1)!}L_{s,r - 1}^{(N - 1)} (0).
\]
By~\eqref{eq:def_L_function_Moucouf_Zriaa2}, $L_{s,r-1}$ is a polynomial of degree $N-1$, and the coefficient of $t^{N-1}$ is $\frac{1}{(\ka_s-r)!}F_s^{(\ka_s - r)}(y_s)$. Therefore,
\[
\left(V_\ka(y)^{-1}\right)_{k, N}  = \frac{1}{(\ka_s-r)!}F_s^{(\ka_s - r)}(y_s).
\]
Finally, by Proposition~\ref{prop: derivative_F_in_terms_of_B}, the latter expression is $B_{y,\ka,s,r}$.
\end{proof}

\begin{thm}
\label{thm:hom_rep_via_powers_B}
Let $\ka\in\bN^n$ and $N=|\ka|$.
Then the following identity holds
in $\bF(y_1,\ldots,y_n)$:
\begin{equation}
\label{eq:hom_rep_via_powers_B}
\hom_m(y^{[\ka]})
=
\sum_{s=1}^{n}
\sum_{r=1}^{\ka_s} 
B_{y,\ka,s,r}\;
\binom{m + N - 1}{r - 1}\;
y_s^{m + N - r}.
\end{equation}
As a consequence,
\eqref{eq:hom_rep_via_powers_B} also holds
if $y_1,\ldots,y_n$
are pairwise different elements of $\bF$.
\end{thm}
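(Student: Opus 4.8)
The plan is to start from the bialternant representation in Corollary~\ref{cor:hom_rep_bialternant}, namely $\hom_m(y^{[\ka]}) = \det G_{(m),\ka}(y)/\det V_\ka(y)$, and to expand the numerator determinant along its last row. By Remark~\ref{rem:G_m}, the last row of $G_{(m),\ka}(y)$ is the only row carrying the exponent $m$: writing $(q,r)=\ga_\ka(k)$, its $(N,k)$ entry equals $\binom{m+N-1}{r-1}\,y_q^{m+N-r}$. The cofactor expansion along row $N$ then reads $\det G_{(m),\ka}(y) = \sum_{k=1}^N G_{(m),\ka}(y)_{N,k}\,(\adj G_{(m),\ka}(y))_{k,N}$, where I use that the $(N,k)$ cofactor is exactly the $(k,N)$ entry of the adjugate (the transpose convention in the adjugate definition).

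Next I would invoke Proposition~\ref{prop:adj_Gm_last_row}, which says that the last column of $\adj G_{(m),\ka}(y)$ coincides with $\det V_\ka(y)$ times the last column of $V_\ka(y)^{-1}$; this is the point where the fact that the first $N-1$ rows of $G_{(m),\ka}(y)$ agree with those of $V_\ka(y)$ is exploited, so that the relevant cofactors are independent of $m$. Substituting this into the cofactor expansion and dividing by $\det V_\ka(y)$ cancels the confluent Vandermonde determinant and yields the clean identity $\hom_m(y^{[\ka]}) = \sum_{k=1}^N G_{(m),\ka}(y)_{N,k}\,(V_\ka(y)^{-1})_{k,N}$.

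Finally I would reindex the sum through the bijection $\rho_\ka\colon(s,r)\mapsto\si(\ka)_{s-1}+r$ of Proposition~\ref{prop:ga_rho_inverse}, so that $k$ ranges over $\{1,\ldots,N\}$ exactly as $(s,r)$ ranges over the pairs with $1\le s\le n$ and $1\le r\le\ka_s$, with $q=s$. For such $k$ the last-row entry is $\binom{m+N-1}{r-1}\,y_s^{m+N-r}$, and by Corollary~\ref{cor:last_column_of_inverse_confluent_Vandermonde_matrix_S_version} the inverse entry $(V_\ka(y)^{-1})_{k,N}$ equals $B_{y,\ka,s,r}$. Collecting the two factors produces exactly the double sum in~\eqref{eq:hom_rep_via_powers_B}, and the statement for pairwise different numerical $y_j$ follows by evaluating the rational-function identity. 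The only delicate point is the bookkeeping: keeping the transpose convention in the cofactor/adjugate relation straight and matching each global index $k$ with its pair $(s,r)$ consistently. All the analytic content has already been isolated in the cited results, so this argument is an assembly rather than a fresh computation.
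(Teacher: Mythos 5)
Your proposal is correct and follows essentially the same route as the paper's own proof: the bialternant formula of Corollary~\ref{cor:hom_rep_bialternant}, cofactor expansion of $\det G_{(m),\ka}(y)$ along the last row, Proposition~\ref{prop:adj_Gm_last_row} to replace the adjugate column by $\det V_\ka(y)\,(V_\ka(y)^{-1})_{k,N}$, and reindexing via $\rho_\ka$ together with Remark~\ref{rem:G_m} and Corollary~\ref{cor:last_column_of_inverse_confluent_Vandermonde_matrix_S_version}. The bookkeeping points you flag (adjugate transpose convention, matching $k$ with $(s,r)$) are handled identically in the paper, so nothing is missing.
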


\begin{proof}
We express $\hom_m(y^{[\ka]})$
as a quotient of two determinants
by Corollary~\ref{cor:hom_rep_bialternant},
expand the determinant
of $G_{(m),\ka}(y)$ by cofactors
along the last row,
and apply Proposition~\ref{prop:adj_Gm_last_row}:
\begin{align*}
\hom_m(y^{[\ka]})
&=
\frac{\det G_{(m),\ka}(y)}{\det V_\ka(y)}
=
\frac{1}{\det V_\ka(y)}
\sum_{k=1}^N
G_{(m),\ka}(y)_{N,k}
\bigl(\adj G_{(m),\ka}(y)\bigr)_{k,N}
\\
&=
\sum_{k=1}^N
G_{(m),\ka}(y)_{N,k}
\bigl(V_\ka(y)^{-1}\bigr)_{k,N}.
\end{align*}
Next, we use the change of variables $\rho_\ka$
from Section~\ref{sec:repeating}:
\[
\hom_m(y^{[\ka]})
=
\sum_{s=1}^{n}
\sum_{r=1}^{\ka_s} 
G_{(m),\ka}(y)_{N,\si(\ka)_{s-1}+r}
\bigl(\adj G_{(m),\ka}(y)\bigr)_{\si(\ka)_{s-1}+r,N}.
\]
Finally, by formulas from
Remark~\ref{rem:G_m}
and Corollary~\ref{cor:last_column_of_inverse_confluent_Vandermonde_matrix_S_version},
we get~\eqref{eq:hom_rep_via_powers_B}.
\end{proof}

Both~\eqref{eq:hom_rep_via_powers_A}
and~\eqref{eq:hom_rep_via_powers_B}
can be written in the form~\eqref{eq:general_form_of_expansion}.
We will directly prove that~\eqref{eq:hom_rep_via_powers_A}
is equivalent to~\eqref{eq:hom_rep_via_powers_B},
but first we illustrate these formulas by a simple example.

\begin{ex}
\label{ex:hom_21}
Let $n = 2$ and $\ka = (2,1)$.
We use explicit expresions for $A_{y,\ka,s,r}$ and $B_{y,\ka,s,r}$ from Example~\ref{ex:A_B_example}. 
First, we will compute $\hom_m(y^{[\ka]})$ by \eqref{eq:hom_rep_via_powers_A}:
\begin{align}
\label{eq:example_P1_A}
P_{y,\ka,1}(m) &= A_{y,\ka,1,1} \binom{m}{0} + A_{y,\ka,1,2}\binom{m + 1}{1} =  -\frac{y_1 y_2}{(y_2 - y_1)^2} - \frac{ (m + 1)y_1}{y_2 - y_1},\\
\label{eq:example_P2_A}
P_{y,\ka,2}(m) &= A_{y, \ka, 2,1}\binom{m}{0} 
= 
\frac{y_2^2}{(y_1 - y_2)^2}.
\end{align}
So,
\begin{align*}
\hom_m\bigl(y_1^{[2]}, y_2^{[1]} \bigr)
&=
P_{y,\ka,1}(m)\,y_1^m
+P_{y,\ka,2}(m)\,y_2^m \\
&= 
\left(-\frac{y_1 y_2}{(y_2 - y_1)^2} - \frac{ (m + 1)y_1}{y_2 - y_1}\right)\,y_1^m
+
\frac{y_2^2}{(y_1 - y_2)^2}\,y_2^m.
\end{align*}
Now, we will compute $\hom_m(y^{[\ka]})$
by~\eqref{eq:hom_rep_via_powers_B}:
\begin{align}
\label{eq:example_P1_B}
P_{y,\ka,1}(m) &= B_{y,\ka,1,1} \binom{m + 2}{0} y_1^{2} + B_{y,\ka,1,2}\binom{m + 2}{1}y_1 =  -\frac{y_1^2}{(y_2 - y_1)^2} - \frac{ (m + 2)y_1}{y_2 - y_1},\\
\label{eq:example_P2_B}
P_{y,\ka,2}(m) &= B_{y, \ka, 2,1}\binom{m + 2}{0}y_2^2 
= 
\frac{y_2^2}{(y_1 - y_2)^2},
\end{align}

\begin{align*}
\hom_m\bigl(y_1^{[2]}, y_2^{[1]} \bigr)
&=
P_{y,\ka,1}(m)\,y_1^m
+P_{y,\ka,2}(m)\,y_2^m \\
&= 
\left(-\frac{y_1^2}{(y_2 - y_1)^2} - \frac{ (m + 2)y_1}{y_2 - y_1}\right)\,y_1^m
+
\frac{y_2^2}{(y_1 - y_2)^2}\,y_2^m.
\end{align*}
The right-hand sides of
\eqref{eq:example_P2_A}
and~\eqref{eq:example_P2_B} coincide.
Let us show that the right-hand sides of~\eqref{eq:example_P1_B} and~\eqref{eq:example_P1_A}
are equal:
\begin{align*}
-\frac{y_1^2}{(y_2 - y_1)^2} - \frac{ (m + 2)y_1}{y_2 - y_1}
&=
-\frac{y_1^2}{(y_2 - y_1)^2} - \frac{y_1}{y_2 - y_1} - \frac{(m + 1)y_1}{y_2 - y_1}
\\
&=
-\frac{y_1^2}{(y_2 - y_1)^2} - \frac{y_1(y_2 - y_1)}{(y_2 - y_1)^2} - \frac{(m + 1)y_1}{y_2 - y_1}\\
&=
-\frac{y_1y_1}{(y_2 - y_1)^2}  - \frac{(m + 1)y_1}{y_2 - y_1}.
\end{align*}
So, in this example,
\eqref{eq:hom_rep_via_powers_A} and \eqref{eq:hom_rep_via_powers_B} yield the same answer.
\hfill\qed
\end{ex}

Now we will consider a general situation
and show directly that the right-hand sides of \eqref{eq:hom_rep_via_powers_A} and \eqref{eq:hom_rep_via_powers_B} coincide. 
We use a well-known formula for the sum of products of binomial coefficients~\cite[Equation 5.26]{GrahamKnuthPatashnik1994}:
\begin{equation}
\label{eq: sum_Graham_Knuth_Patashnik}
\sum_{j = 0}^l \binom{l - j}{m}\binom{q + j}{n} = \binom{l + q + 1}{m + n + 1}.
\end{equation}

\begin{prop}
Let $\ka\in\bN^n$.
Then the following identity holds in $F(y_1,\ldots,y_n)$: 
\begin{equation}
\label{eq: equivalence_A_and_B}
\sum_{r=1}^{\ka_s}
\binom{m + r - 1}{r - 1}
A_{y,\ka,s,r}
=
\sum_{r=1}^{\ka_s} 
\binom{m + N - 1}{r - 1}\;
B_{y,\ka,s,r}\;y_s^{N -r}.
\end{equation}
\end{prop}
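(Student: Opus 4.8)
The plan is to reduce the claimed identity to the binomial convolution \eqref{eq: sum_Graham_Knuth_Patashnik} by first establishing a clean linear relation that expresses each $A_{y,\ka,s,r}$ through the $B_{y,\ka,s,p}$ with $p\ge r$ (same $y$ and same $s$). The crucial observation is that the arguments of the complete homogeneous polynomials appearing in the definitions of $A$ and $B$ are affinely related: since
\[
w_{s,d}=\frac{y_d}{y_d-y_s}=1+\frac{y_s}{y_d-y_s}=1+y_s\,z_{s,d},
\]
the repeated list $w^{[\la]}$ (of length $N-\ka_s$, where $N=|\ka|$ and $\la=(\ka_d)_{d\neq s}$) equals $1^{[N-\ka_s]}+y_s\,z^{[\la]}$.

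Next I would apply Proposition~\ref{prop:Gomezllata} in $N-\ka_s$ variables, together with the homogeneity $\hom_k(y_s z^{[\la]})=y_s^k\hom_k(z^{[\la]})$, to obtain
\[
\hom_{\ka_s-r}\bigl(w^{[\la]}\bigr)
=\sum_{k=0}^{\ka_s-r}\binom{N-r-1}{\ka_s-r-k}\,y_s^{k}\,\hom_k\bigl(z^{[\la]}\bigr).
\]
Substituting this into the definition~\eqref{eq:A_def_via_hom} and replacing $\hom_k(z^{[\la]})$ (with $k=\ka_s-p$) by its value coming from~\eqref{eq:B_def_via_hom}, the prefactors $(-y_s)^{N-\ka_s}$, $(-1)^{N-\ka_s}$ and $\prod_{d\neq s}(y_d-y_s)^{\ka_d}$ cancel up to the single power $y_s^{N-\ka_s}$, and after setting $p=\ka_s-k$ I expect the relation
\[
A_{y,\ka,s,r}=\sum_{p=r}^{\ka_s}\binom{N-r-1}{p-r}\,y_s^{N-p}\,B_{y,\ka,s,p},
\]
which I have checked against Example~\ref{ex:A_B_example} for $n=2$, $\ka=(2,1)$.

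Finally I would insert this relation into the left-hand side of the claimed identity and interchange the order of summation (summing first over $p$), which reduces the whole statement to the scalar identity
\[
\sum_{r=1}^{p}\binom{m+r-1}{r-1}\binom{N-r-1}{p-r}=\binom{m+N-1}{p-1},
\]
required for each $p$ with $1\le p\le \ka_s$. Using the symmetry $\binom{N-r-1}{p-r}=\binom{N-r-1}{N-p-1}$ to make the lower index constant and setting $j=r-1$, this is precisely~\eqref{eq: sum_Graham_Knuth_Patashnik} with $l=N-2$, $q=m$, and lower indices $N-p-1$ and $m$; the terms with $r>p$ contribute zero, so extending the range to $j=0,\dots,N-2$ is harmless and the summation limits match. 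The main obstacle will be the bookkeeping in deriving the $A$–$B$ relation — tracking the signs, the homogeneity degree, and the reindexing $p=\ka_s-k$ — and then selecting the right binomial symmetry so that~\eqref{eq: sum_Graham_Knuth_Patashnik} applies verbatim; this requires $p\le N-1$, which holds automatically when $n\ge 2$ (since then $\ka_s\le N-1$), while the case $n=1$ is immediate because $A$ and $B$ both collapse to $\hom$ of the empty list.
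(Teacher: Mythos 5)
Your proposal is correct and follows essentially the same route as the paper's proof: both rest on the relation $w_{s,d}=1+y_s z_{s,d}$, the application of Proposition~\ref{prop:Gomezllata} together with homogeneity, an interchange of the order of summation, and the binomial identity~\eqref{eq: sum_Graham_Knuth_Patashnik}. Packaging the intermediate step as the explicit relation $A_{y,\ka,s,r}=\sum_{p=r}^{\ka_s}\binom{N-r-1}{p-r}y_s^{N-p}B_{y,\ka,s,p}$ is only a mild reorganization of the paper's computation (and your extra care with the zero-term range extension and the case $n=1$ is sound).
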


\begin{proof}
First, we establish a connection between the auxiliar variables $w_{s,d}$ and $z_{s,d}$ appearing in \eqref{eq:B_def_via_hom} and \eqref{eq:A_def_via_hom}:
\[
w_{s,d}
= \frac{y_d}{y_d - y_s}
= 1 + \frac{y_s}{y_d - y_s} = 1 + y_s z_{s,d}.
\]
Furthermore, we use the following short notation for the list of multiplicities and variables appearing in~\eqref{eq:A_def_via_hom} and \eqref{eq:B_def_via_hom}:
\[
\la = \left(\ka_d \right)_{d \in \{1,\ldots, n\} \setminus \{s\}} = (\ka_1,\ldots,\ka_{s-1}, \ka_{s + 1}, \ldots, \ka_n),
\]
\[
w = (w_{s,1},\ldots, w_{s,s-1}, w_{s, s+ 1}, \ldots,w_{s,n}),\qquad
z = (z_{s,1},\ldots,z_{s,s-1}, z_{s, s+ 1}, \ldots,z_{s, n}).
\]
With this notation, $w^{[\la]} = 1^{[N - \ka_s]} + y_s z^{[\la]}$. By Proposition~\ref{prop:Gomezllata}, 
\begin{align*}
A_{y,\ka,s,r}
&=
(-y_s)^{N - \ka_s}\left(\prod_{d \neq s}z_{s,d}^{\ka_d}\right)
\hom_{\ka_s-r}\left( w^{[\la]}\right)
\\[1ex]
&=
(-y_s)^{N - \ka_s}\left(\prod_{d \neq s}z_{s,d}^{\ka_d}\right)
\hom_{\ka_s-r}\left(1^{[N -\ka_s]} + y_sz^{[\la]} \right)
\\[1ex]
&= 
(-y_s)^{N - \ka_s}\left(\prod_{d \neq s}z_{s,d}^{\ka_d}\right)
\sum_{k = 0}^{\ka_s - r}
\binom{N - r - 1}{\ka_s - r - k}
\hom_{k}\left(y_sz^{[\la]}\right)
\\[1ex]
&= 
(-1)^{N - \ka_s}\left(\prod_{d \neq s}z_{s,d}^{\ka_d}\right) \sum_{k = 0}^{\ka_s - r}y_s^{N - \ka_s + k}
\binom{ N  - r - 1}{\ka_s - r - k}
\hom_{k}\left(z^{[\la]}\right).
\end{align*}
We denote by $L$ the left-hand side of~\eqref{eq: equivalence_A_and_B}, substitute the obtained expression for $A_{y,\ka,s,r}$, and change the order of the sums:
\begin{align*}
L 
&=\sum_{r=1}^{\ka_s}
\binom{m + r - 1}{r - 1} (-1)^{N - \ka_s}\left(\prod_{d \neq s}z_{s,d}^{\ka_d}\right)
\sum_{k = 0}^{\ka_s - r}
y_s^{N - \ka_s + k}
\binom{N - r - 1}{\ka_s - r - k}
\hom_{k}\left(z^{[\la]}\right)\\[1ex]
&=
\sum_{k = 0}^{\ka_s - 1} 
(-1)^{N - \ka_s}\left(\prod_{d \neq s}z_{s,d}^{\ka_d}\right)y_s^{N - \ka_s + k} \hom_{k}\left(z^{[\la]}\right)
\sum_{r = 1}^{\ka_s - k} \binom{m + r - 1}{r - 1} \binom{N - r - 1}{\ka_s - r - k}.
\end{align*}
The inner sum can be extended with zero summands and simplified with help of \eqref{eq: sum_Graham_Knuth_Patashnik}:
\begin{align*}
&\sum_{r = 1}^{\ka_s - k} \binom{m + r - 1}{r - 1} \binom{N - r - 1}{\ka_s - r - k} =
\sum_{r = 1}^{N-1} \binom{m + r - 1}{r - 1} \binom{N - r - 1}{\ka_s - r - k} \\
&\qquad= \sum_{j = 0}^{N - 2} 
\binom{N - 2 - j}{N-\ka_s +k - 1}  \binom{m + j}{m}
= 
\binom{N + m - 1}{\ka_s -k - 1}.    
\end{align*}
Finally, we substitute the last expression instead of the inner sum and make the change of variables $r = \ka_s - k$:
\begin{align*}
L&=
\sum_{k = 0}^{\ka_s - 1} 
(-1)^{N - \ka_s}\left(\prod_{d \neq s}z_{s,d}^{\ka_d}\right)y_s^{N - \ka_s + k} \hom_{k}\left(z^{[\la]}\right)
\binom{N + m - 1}{\ka_s -k - 1}\\
&=
\sum_{r = 1}^{\ka_s} 
(-1)^{N - \ka_s}\left(\prod_{d \neq s}z_{s,d}^{\ka_d}\right) y_s^{N - r}\hom_{\ka_s - r}\left(z^{[\la]}\right) 
\binom{N + m - 1}{r - 1}\\
&=
\sum_{r=1}^{\ka_s}
\binom{N + m - 1}{r - 1}
B_{y,\ka,s,r} y_s^{N-r}.
\end{align*}

\end{proof}

\begin{rem}[computational complexity of~\eqref{eq:hom_rep_via_powers_B} and~\eqref{eq:hom_rep_via_powers_A}]
Suppose that $T$ is an approximative version of $\bR$ or $\bC$,
and $y_1,\ldots,y_n$ are pairwise different elements of $T$.
Each coefficient $A_{y,\ka,s,r}$ or $B_{y,\ka,s,r}$
can be computed by~\eqref{eq:hom_rep_bialternant}
using $O(N^3)$ operations in $T$.
So, the total number of operations
can be estimated by $O(N^4)+O(N\log m)$.
It is usually bigger than the number of operations used in~\eqref{eq:hom_rep_bialternant}.
Nevertheless,~\eqref{eq:hom_rep_via_powers_B} and~\eqref{eq:hom_rep_via_powers_A} can be more useful than~\eqref{eq:hom_rep_bialternant}
when computing $\hom_m(y^{[\ka]})$
for many values of $m$,
with fixed $y$ and $\ka$.
\end{rem}

\begin{rem}
It is easy to prove by mathematical induction that
\begin{equation}
\label{eq:binomial_and_stirling}
\binom{x+n}{n}
=
\frac{1}{n!}
\sum_{j=0}^n
\stirlingone{n+1}{j+1}
\,
x^j,
\end{equation}
where
$\stirlingone{a}{b}$
are unsigned Stirling numbers of the first kind.
Expanding
$\binom{m+r-1}{r-1}$
in~\eqref{eq:hom_rep_via_powers_A}
we get
\begin{equation}
\label{eq:hom_via_C}
\hom_m(y^{[\ka]})
=
\sum_{n=1}^n
\sum_{j=0}^{\ka_s-1}
C_{y,\ka,s,j} m^j y_s^m,
\end{equation}
where
\begin{equation}
\label{eq:C_via_A}
C_{y,\ka,s,j}
=
\sum_{r = j + 1}^{\ka_s}
\frac{1}{(r-1)!}
\stirlingone{r}{j + 1}
A_{y,\ka,s,r}.
\end{equation}
It would be interesting to obtain
a simpler expression for $C_{y,\ka,s,j}$.
\end{rem}


\begin{rem}
We have tested the main formulas of this paper
in Sagemath~\cite{Sagemath}.
Our programs are freely available at~\cite{GM2024testshomrep}.
In particular,
we have computed $\hom_m(y^{[\ka]})$
using symbolic computations over
$\bQ(y_1,\ldots,y_n)$,
by~\eqref{eq:hom_rep_via_powers_A},
\eqref{eq:hom_rep_via_powers_B},
and~\eqref{eq:hom_rep_bialternant},
for all $n$, $m$, $\ka$
with
$n\le 8$, $m\le 10$, and $|\ka|\le 10$.
For all these values of parameters,
the polynomials obtained by different formulas coincide.
In the same spirit,
we have tested many other formulas of this paper.
\end{rem}

\label{endlabel}
\end{document}